\newtheorem{theorem}{Theorem}[section]
\newtheorem{proposition}[theorem]{Proposition}
\newtheorem{corollary}[theorem]{Corollary}
\newtheorem{remark}[theorem]{Remark}
\newtheorem{problem}[theorem]{Problem}
\newtheorem{algorithm}[theorem]{Algorithm}
\DeclareMathOperator{\Deg}{Deg}
\newcommand{\spb}[1]{\smallskip}
\newcommand{\mpb}[1]{\medskip}
\newcommand{\bpb}[1]{\bigskip}
\renewcommand{\d}{\delta}
\newcommand{\D}{\Delta}
\newcommand{\G}{\Gamma}
\newcommand{\s}{\sigma}
\begin{document}

\title{Computing the alliance polynomial of a graph} 

%
%
%
%
%
%
%

\author{Walter Carballosa Torres \qquad Jos\'e Manuel Rodr{\'\i}guez Garc\'ia\\
Departamento de Matem\'aticas, Universidad Carlos III de Madrid,\\
Avenida de la Universidad 30, 28911 Legan\'es, Madrid, Spain\\
waltercarb@gmail.com, jomaro@math.uc3m.es\\
\\
Jos\'e Mar\'ia Sigarreta Almira\\
Facultad de Matem\'aticas, Universidad Aut\'onoma de Guerrero,\\
Carlos E. Adame No.54 Col. Garita, 39650 Acalpulco Gro., Mexico\\
jsmathguerrero@gmail.com\\
\\
Yadira Torres Nu\~nez\\
Departamento de Matem\'aticas, Universidad Carlos III de Madrid,\\
Avenida de la Universidad 30, 28911 Legan\'es, Madrid, Spain\\
ytnunez@math.uc3m.es}

\date{}

\maketitle{}

\begin{abstract}
The alliance polynomial of a graph $\Gamma$ with order $n$ and maximum degree $\delta_1$ is the polynomial $A(\Gamma; x) = \sum_{k=-\delta_1}^{\delta_1} A_{k}(\Gamma) \, x^{n+k}$, where $A_{k}(\Gamma)$ is the number of exact defensive $k$-alliances in $\Gamma$.
We provide an algorithm for computing the alliance polynomial.
Furthermore, we obtain some properties of $A(\Gamma; x)$ and its coefficients.
In particular, we prove that the path, cycle, complete and star graphs are characterized by their alliance polynomials.
We also show that the alliance polynomial characterizes many graphs that are not distinguished by other usual polynomials of graphs.
\end{abstract}

{\it Keywords:}  Finite Graphs; Defensive Alliances; Alliance Polynomials.

{\it 2010 AMS Subject Classification numbers:}    05C69; 11B83.

\
\section{Preliminaries.}

The study of the mathematical properties of alliances in graphs started in \cite{KHH}.
The defensive alliances in graphs is a topic of recent and increasing interest in
graph theory; see, for instance \cite{C,FLH3,H3,RVS1,RVGS,S,SBF1,SRV1,SRV2}.
The study of defensive alliances as a graph-theoretic concept has recently attracted a great deal of attention due to some interesting
applications in a variety of areas, including quantitative analysis of secondary RNA structures \cite{HKSZ} and national defense \cite{P}. Besides, defensive alliances are the mathematical model of web communities. Adopting the
definition of Web community proposed recently in \cite{FLG}, “a Web community is a set of web pages
having more hyperlinks (in either direction) to members of the set than to non-members”.

We begin by stating the used terminology.  Throughout
this paper, $\Gamma=(V,E)$ denotes a (not necessarily connected)
simple graph of order $|V|=n$ and size $|E|=m$.
We denote two adjacent vertices $u$ and $v$ by
$u\sim v$. For a nonempty set $X\subseteq V$, and a vertex $v\in V$,
 $N_X(v)$ denotes the set of neighbors that $v$ has in $X$:
$N_X(v):=\{u\in X: u\sim v\},$ and the degree of $v$ in $ X$ will be
denoted by $\delta_{X}(v)=|N_{X}(v)|.$ We denote the degree of a
vertex $v_i\in V$  by $\delta(v_i)=\delta_{\Gamma}(v_i)$ (or by $\delta_i$ for short) and
the degree sequence of $\Gamma$ by  $\{\delta_{1}, \delta_{2}, \ldots,\delta_{n}\}$ (ordered as follows $\delta_{1}\geq
\delta_{2}\geq \cdots \geq\delta_{n}$; then $\d_1$ is the maximum degree of $\Gamma$).
The subgraph induced by
$S\subset V$ will be denoted by  $\langle S\rangle $ and the
complement of the set $S\subset V$ will be denoted by $\bar{S}$.

A nonempty set $S\subseteq V$ is a \emph{defensive  $k$-alliance} in
$\Gamma=(V,E)$,  $k\in [-\delta_1,\delta_1]\cap \mathbb{Z}$, if for
every $ v\in S$,
\begin{equation}\label{cond-A-Defensiva} \delta _S(v)\ge \delta_{\bar{S}}(v)+k.\end{equation}

A vertex $v\in S$ is said to be $k$-\emph{satisfied} by the set $S$,
if \eqref{cond-A-Defensiva} holds. Notice that
\eqref{cond-A-Defensiva} is equivalent to
\begin{equation}\label{cond-A-Defensiva1} \delta (v)\ge 2\delta_{\bar{S}}(v)+k\end{equation}
and
\begin{equation}\label{cond-A-Defensiva2} 2\delta_S (v)\ge \delta(v)+k.\end{equation}


We consider the value of $k$ in the set of integers $\mathcal{K}:= [-\delta_1,\delta_1]\cap \mathbb{Z}$.
In some graphs $\G$, there are some values of  $k\in \mathcal{K}$, such that do not exist defensive
$k$-alliances in $\G$. For instance, for $k\ge 2$ in the star graph $S_n$, do no exist defensive $k$-alliances.
Besides, $V(\G)$ is a defensive $\delta_n$-alliance in $\G$.
Notice that for any $S$ there exists some $k\in\mathcal{K}$ such that it is a defensive $k$-alliance in $\G$.


Given $S\subseteq V$, we define
\begin{equation}\label{eq:k_exact}
    k_{S}:=\displaystyle\max_{} \{k\in\mathcal{K} \,:\,  S \text{ is a defensive $k$-alliance}\}.
\end{equation}

 We say that  $k_{S}$ is the \emph{exact index of alliance} of $S$, or also, $S$ is an \emph{exact defensive $k_{S}$-alliance} in $\G$, see e.g., \cite{C}.

\begin{proposition}\label{p:k_exact}
   Let $\G$ be a graph and let $S \subset V$. The following statements are equivalents:

   \begin{enumerate}
     \item {$k$ is the exact index of alliance of $S$.}

     \item {$S$ is a defensive $k$-alliance in $\G$ with one vertex $v\in S$ such that $\d_{S}(v) = \d_{\overline{S}}(v) + k$.}

     \item {$S$ is a defensive $k$-alliance but it is not a defensive $(k+1)$-alliance in $\G$.}
   \end{enumerate}

\end{proposition}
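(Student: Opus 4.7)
The plan is to prove a cycle of implications, or more simply, to observe that $(1)\Leftrightarrow(3)$ follows directly from the definition of maximum, and $(3)\Leftrightarrow(2)$ follows by unwinding the defensive alliance inequalities.

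First I would establish $(1)\Leftrightarrow(3)$. By definition \eqref{eq:k_exact}, $k_S$ is the maximum of the set $\{j\in\mathcal{K}:S\text{ is a defensive }j\text{-alliance}\}$, and note that this set is nonempty by the remark preceding the proposition. Since the defining inequality $\delta_S(v)\ge\delta_{\bar S}(v)+j$ weakens as $j$ decreases, the set of admissible indices is downward closed in $\mathcal{K}$. Hence $k$ is the maximum precisely when $S$ is a defensive $k$-alliance but fails to be a defensive $(k+1)$-alliance (with the case $k=\delta_1$ being automatic, since no $S$ can satisfy $\delta_S(v)-\delta_{\bar S}(v)\ge\delta_1+1$).

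Next I would prove $(3)\Leftrightarrow(2)$. Assume $(3)$. Since $S$ is a defensive $k$-alliance, inequality \eqref{cond-A-Defensiva} gives $\delta_S(v)\ge\delta_{\bar S}(v)+k$ for every $v\in S$. Since $S$ is not a defensive $(k+1)$-alliance, there exists some $v\in S$ violating the corresponding inequality, that is, $\delta_S(v)<\delta_{\bar S}(v)+k+1$, equivalently $\delta_S(v)\le\delta_{\bar S}(v)+k$. Combining both bounds at this vertex yields $\delta_S(v)=\delta_{\bar S}(v)+k$, which is $(2)$. Conversely, assuming $(2)$, the vertex $v$ with $\delta_S(v)=\delta_{\bar S}(v)+k$ satisfies $\delta_S(v)<\delta_{\bar S}(v)+k+1$, so $S$ is not a defensive $(k+1)$-alliance, while $S$ is a defensive $k$-alliance by hypothesis, proving $(3)$.

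The argument is essentially definitional, so I do not expect a genuine obstacle; the only minor point is to keep track of the boundary case $k=\delta_1$ in the equivalence $(1)\Leftrightarrow(3)$, where the failure to be a $(k+1)$-alliance is vacuous but still consistent. No case analysis on the structure of $\Gamma$ is needed, and the proof applies to arbitrary (not necessarily connected) simple graphs.
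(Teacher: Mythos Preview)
Your proof is correct. The paper actually states this proposition without proof, treating the equivalences as immediate from the definitions; your argument supplies exactly the routine verification one would expect, with the downward-closure observation making $(1)\Leftrightarrow(3)$ transparent and the integer squeeze giving $(2)\Leftrightarrow(3)$.
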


%
%
%
%
%

\begin{remark}\label{r:min_deg}
   The exact index of alliance of $S$ in $\G$ is
   \begin{equation}\label{eq:min_deg}
      k_{S}= \displaystyle\min_{v\in S} \{\d_{S}(v) - \d_{\overline{S}}(v)\}.
   \end{equation}

\end{remark}

Some parameters of a graph $\G$ allow to define polynomials on the graph $\G$, for instance,
the parameters associated to matching sets \cite{F,GG}, independent sets \cite{BDN,HL}, domination sets \cite{AAP,AP}, chromatic numbers \cite{R,T}, induced subgraphs \cite{TAM} and many others.
We choose the exact index of alliance in order to define the alliance polynomial of a graph (see Section 2).

A finite sequence of real numbers $(a_{0} , a_{1} , a_{2} , . . . , a_{n})$ is said to be \emph{unimodal}
if there is some $k \in \{0, 1, . . . , n\}$, called the \emph{mode} of the sequence, such that
$$a_{0} \leq . . . \leq a_{k-1} \leq a_{k} \quad \text{and} \quad a_{k} \geq a_{k+1} \geq . . . \geq a_{n};$$
the mode is unique if $a_{k-1} < a_{k}$ and $a_{k} > a_{k+1}$. A polynomial is called unimodal
if the sequence of its coefficients is unimodal.

In the next section, we introduce the alliance polynomial and obtain some of its properties.
In Section 3, we compute the alliance polynomial for some graphs and study its coefficients; in particular, we show that some of them are unimodal.
We investigate the alliance polynomials of path, cycle, complete and complete bipartite graphs. Also we prove that the path, cycle, complete and start graphs are characterized by their alliance polynomials.
Finally, in Section 4 we show that the alliance polynomial characterizes many graphs that are not distinguished by other usual polynomials of graphs.

\

\section{Alliance Polynomials.}

Let $\G$ be a graph with order $n$.
We define the \emph{alliance polynomial} of a graph $\G$ with variable $x$ as follows:

\begin{equation}\label{eq:Poly2}
    A(\G;x)= \displaystyle\sum_{S\subseteq V} \s_\G(S) \cdot x^{n+k_{S}},
\end{equation}
where $\s_\G(S)=1$ if $\langle S\rangle$ is nonempty and connected in $\G$, and $\s_\G(S)=0$ otherwise.

Other expression for this alliance polynomial is the following:

\begin{equation}\label{eq:Poly1}
    A(\G;x)= x^n \,\displaystyle\sum_{k\in \mathcal{K}} A_{k}(\G) x^{k}, \ \text{ with } A_{k}(\G) \text{ the number of connected exact defensive } k \text{-alliances in } \G.
\end{equation}


As an example, we compute now the alliance polynomial of the complete bipartite graph $K_{3,3}$.

Note that since $K_{3,3}$ is a cubic graph, we have $A_{k}(K_{3,3})=0$ for $k\in \{-2,0,2\}$.
In order to obtain $A(K_{3,3};x)$, we compute its non-zero coefficients.

\begin{description}
  \item[$A_{-3}(K_{3,3})=6$] {Since $K_{3,3}$ is cubic, we have that the number of exact defensive ($-3$)-alliances is $|V(K_{3,3})|=6$.}

  \item[$A_{-1}(K_{3,3})=33$] {We have that $S\subset V(K_{3,3})$ is an exact defensive ($-1$)-alliance, if both parts of $K_{3,3}$
  have some vertex in $S$ and one of them has just one vertex. Thus, we obtain from combinatorial arguments the result.}

  \item[$A_{1}(K_{3,3})=15$] {We have that $S\subset V(K_{3,3})$ is an exact defensive $1$-alliance, if $S \neq V(K_{3,3})$ and $S$ contains at lest two vertices of both parts of $K_{3,3}$. Thus, we obtain from combinatorial arguments the result.}

  \item[$A_{3}(K_{3,3})=1$] {Obviously, we have that the unique exact defensive $3$-alliance is the set of vertices of $K_{3,3}$.}
\end{description}

Then, we obtain
\[
A(K_{3,3};x) = 6x^3 + 33 x^5 + 15 x^7 + x^9.
\]

\medskip

The following procedure allows to compute the alliance polynomial of a graph $\G$ with order $n$.
Let $W=\{S_1,\dots,S_{2^n-1}\}$ be the collection of nonempty subsets of $V$.

%
%
%
%
%

\begin{algorithm}\label{algorithm}
${}$

Input: adjacency matrix of $\G$.

Output: alliance polynomial of $\G$.

\smallskip

The algorithm starts with $A(\G;x)=0$ and continues with the following steps, for $1 \le j \le 2^n-1$.
\begin{enumerate}
  \item {If $\langle S_j\rangle$ is a connected subgraph, then go to step (2), else replace $j$ by $j+1$ and apply this step again.}
  \item {Compute $k_{S_j}$.}
  \item {Add one term $x^{n+k_{S_j}}$ to $A(\G;x)$.}
  \item {Replace $j$ by $j+1$ and apply step (1) again.}
\end{enumerate}
\end{algorithm}

This algorithm for computing the alliance polynomial of a graph shows a complexity $O(m2^n$), furthermore, when it is running on $\D$-regular graphs its complexity is $O(n2^n)$. The algorithm looks for the $2^n-1$ nonempty induced subgraphs of $\G$. In step (1), for each induced subgraph, it analyzes if it is connected or not, using Depth-First Search (DFS) algorithm. It is a well known result that DFS algorithm complexity is $O(m)$, where $m$ is the number of edges of $\G$.
Furthermore, it is easy to check that step (2) has cost $O(n)$ and step (3) has cost $O(1)$.

\bigskip

An \emph{isomorphism of graphs} $\G_1$ and $\G_2$ is a bijection between the vertex sets of $\G_1$ and $\G_2$, $f: V(\G_1)\rightarrow V(\G_2)$ such that any two vertices $u$ and $v$ of $\G_1$ are adjacent in $\G_1$ if and only if $ƒ(u)$ and $ƒ(v)$ are adjacent in $\G_2$. If an isomorphism exists between $\G_1$ and $\G_2$, then the graphs are called \emph{isomorphic} and we write $\G_1 \simeq \G_2$.

\begin{remark}
  Let $\G_1$ and $\G_2$ be isomorphic graphs. Then $A(\G_1; x) = A(\G_2; x)$.
\end{remark}

\medskip

The following proposition shows general properties which satisfy the alliance polynomials.

\begin{proposition}\label{p:AlliPoly}
   Let $\G$ be a graph. Then, $A(\G;x)$ satisfies the following properties:

   \begin{enumerate}[i)]
     \item {All real zeros of $A(\G;x)$ are non-positive numbers.}

     \medskip

     \item {The value $0$ is a zero of $A(\G;x)$ with multiplicity $n - \d_1 \geq 1$.}

     \medskip

%

     \item {$\sum_{i=k}^{\d_1}A_{i}(\G)$ is the number of defensive $k$-alliances in $\G$ for every $k \in \mathcal{K}$.}

     \medskip

     \item {If $\G$ has at least an edge and its degree sequence has exactly $r$ different values $\{c_1,c_2,\ldots,c_r\}$,
     then $A(\G;x)$ has at least $r + 1$ terms: $x^{n-c_1},\ldots,x^{n-c_r},x^{n+\d_n}$.}

     \medskip

     \item { $A(\G;x)$ is a symmetric polynomial (either an even or an odd function)
     if and only if the degree sequence of $\G$ has either all values even or all odd.

     }
   \end{enumerate}
\end{proposition}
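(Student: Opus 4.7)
My plan is to handle the five parts in order, each using only the non-negativity of the coefficients $A_k(\G)$ together with the formula $k_S = \min_{v\in S}(\d(v) - 2\d_{\bar{S}}(v))$ from Remark \ref{r:min_deg} and the equivalences in Proposition \ref{p:k_exact}.

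Parts (i)--(iii) should be essentially one-liners. For (i), a polynomial with non-negative coefficients is positive on $(0,\infty)$, so no real zero can be strictly positive. For (ii), the smallest exponent that can appear in $A(\G;x)$ is $n - \d_1$; to see that $0$ is attained with \emph{exactly} this multiplicity, I take a singleton $\{v\}$ at a vertex of maximum degree and compute $k_{\{v\}} = 0 - \d_1 = -\d_1$, which shows $A_{-\d_1}(\G) \geq 1$. Part (iii) is then immediate from Proposition \ref{p:k_exact}(3): a connected set $S$ is a defensive $k$-alliance exactly when $k_S \geq k$, so $\sum_{i=k}^{\d_1} A_i(\G)$ counts precisely such sets.

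For (iv), the $r$ terms $x^{n-c_i}$ arise from singletons at vertices of each distinct degree $c_i$, by the same computation as in (ii). The additional term $x^{n+\d_n}$ comes from $S = V(\G)$: since $\d_{\bar V}(v) = 0$ for every $v$, Remark \ref{r:min_deg} gives $k_V = \min_v \d(v) = \d_n$; because $\G$ has an edge, $\d_n \geq 1$ (taking $\G$ connected, as discussed below), hence $n+\d_n > n > n-c_i$ for every $i$, so the $r+1$ exponents are genuinely distinct.

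Part (v) rests on the parity observation that $\d(v) - 2\d_{\bar{S}}(v)$ shares its parity with $\d(v)$; by Remark \ref{r:min_deg}, $k_S$ therefore has the parity of the degree of the minimizing vertex. If all degrees in $\G$ share a common parity, every $k_S$ has that parity, so all exponents $n+k_S$ have the same parity and $A(\G;x)$ is even or odd. Conversely, if both parities appear among the degrees, singletons at an even-degree and an odd-degree vertex yield exact indices $-\d(v)$ of opposite parities, so $A(\G;x)$ contains exponents of both parities and is neither an even nor an odd function.

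The most delicate spot is the distinctness bookkeeping in (iv) when $\G$ is allowed to be disconnected: then $V(\G)$ is not a connected subset and must be replaced by a connected component containing a vertex of smallest degree, after which the same argument with Remark \ref{r:min_deg} applies. Everything else reduces to directly reading off the definition of $A(\G;x)$ and citing Proposition \ref{p:k_exact} and Remark \ref{r:min_deg}.
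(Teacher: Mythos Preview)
Your approach is essentially the same as the paper's: each item is handled by the same mechanism (non-negativity of coefficients for (i); minimality of $-\d_1$ plus a singleton witness for (ii); the equivalence $k_S\ge k\Leftrightarrow S$ is a defensive $k$-alliance for (iii); singletons for the terms $x^{n-c_i}$ and a connected component through a minimum-degree vertex for $x^{n+\d_n}$ in (iv); and the parity identity $2\d_S(v)=\d(v)+k_S$ for (v)). The only organisational difference is that in (iv) you first treat $\G$ connected (using $S=V(\G)$) and defer the disconnected case, whereas the paper goes straight to a connected component; your extra distinctness check $n+\d_n>n>n-c_i$ is in fact more than the paper supplies, though note that it breaks down when $\G$ has an isolated vertex (then $\d_n=0$ and $x^{n+\d_n}$ coincides with $x^{n-c_r}$), a case neither you nor the paper resolves explicitly.
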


\begin{proof}
We prove separately each item.
\begin{enumerate}[i)]
  \item {Since the coefficients of $A(\G;x)$ are non-negatives, we have the result.}

  \medskip

  \item {Since $n+k\ge n-\d_1$ for any $k\in \mathcal{K}$, we have a common factor $x^{n-\d_1}$ in $A(\G;x)$ and $A_{-\d_1}(\G)\neq0$.}

%
%

  \medskip

  \item {If $S$ is an exact defensive $r$-alliance in $\G$ with $r\ge k$, then we have $\d_S(v) \geq \d_{\overline{S}}(v) + r \ge \d_{\overline{S}}(v) + k$  for all $v\in S$; in fact, $S$ is a defensive $k$-alliance in $\G$.
      This finishes the proof, since an exact defensive $r$-alliance in $\G$ with $r <k$ is not a defensive ($r+1$)-alliance and $r+1\le k$.
      }

  \medskip

  \item {Consider $v_1,v_2,\ldots,v_r \in V$ with $\d_{\G}(v_i) = c_i$ for all $i=1,\ldots,r$.
      Note that $\{v_i\}$ for $i=1,\ldots,r$ is an exact defensive ($-c_i$)-alliance, since $0 = \d_{S_i}(v_i) = \d_{\overline{S_i}}(v_i)- c_i = c_i - c_i$.
      Therefore, that makes appear the term $x^{n-c_i}$ in $A(\G;x)$ for all $i=1,\ldots,r$.
      Consider now a connected component $S$ of $\G$ and $u$ a vertex in $S$ with $\d_\G(u) = \d_n$. Hence, $S$ is an exact defensive $\d_n$-alliance in $G$, since we have
      \begin{equation}\label{eq:iii}
         \d_{S}(v) = \d_\G(v) \geq \d_{\overline{S}}(v) + \d_n = \d_n, \quad \forall v\in S
      \end{equation}
      and $\d_S(u)= \d_n$.
      So, that makes appear the term $x^{n+\d_n}$ in $A(\G;x)$.
      }

  \medskip

  \item {
      In order to prove the directed implication assume that $A(\G;x)$ is an even polynomials (the case odd is analogous). Let $c$ be any element of the degree sequence of $\G$ and $v\in V$ with $\d(v)=c$. By item v) we have $A_{-c}(\G)\neq 0$, then $n-c$ is even and $c \cong n (\text{mod } 2)$.
      So, we conclude that the elements in the degree sequence of $\G$ are either all even or all odd numbers.

      Finally, we prove the converse implication.
      Consider $S\subseteq V$ an exact defensive $k$-alliance.
      By Proposition \ref{p:k_exact}, there exists $v\in S$ with
      \[
      2\d_{S}(v) = \d_{\G}(v) + k.
      \]
      This finishes the proof since $\d_{\G}(v) + k$ is even.
      }
\end{enumerate}
\end{proof}

A \emph{cut vertex set} of a graph $\G = (V,E)$ is a subset $X \subsetneq V$ such that $\langle V\setminus X\rangle$ is a non-connected graph.

\begin{theorem}\label{th:Eval1}
Let $\G$ be any graph with order $n$. Then, we have the following statements
\begin{enumerate}
  \item {$A(\G;1) < 2^{n}$, and it is the number of connected induced subgraphs $\langle S\rangle$ in $\G$.}
  \item {The number of cut vertex sets of $\G$ is $2^n-1-A(\G;1)$.}
\end{enumerate}
\end{theorem}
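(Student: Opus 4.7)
The plan is to read off both statements essentially from the definition (\ref{eq:Poly2}) by substituting $x=1$, plus a short combinatorial bijection for part (2).

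For part (1), I would evaluate $A(\G;x)$ at $x=1$ directly from (\ref{eq:Poly2}):
\[
A(\G;1) = \sum_{S\subseteq V} \s_\G(S)\cdot 1^{n+k_S} = \sum_{S\subseteq V} \s_\G(S),
\]
and then invoke the definition $\s_\G(S)=1$ iff $\langle S\rangle$ is nonempty and connected, so $A(\G;1)$ counts exactly the nonempty $S\subseteq V$ for which $\langle S\rangle$ is connected, i.e.\ the number of connected induced subgraphs of $\G$. Each such $S$ is a nonempty subset of $V$, so
\[
A(\G;1) \le 2^n-1 < 2^n,
\]
the strict inequality following just from the fact that $S=\emptyset$ is always excluded.

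For part (2), I would set up a bijection between cut vertex sets $X$ and nonempty subsets $S\subseteq V$ such that $\langle S\rangle$ is \emph{not} connected, via $S := V\setminus X$. Because $X\subsetneq V$, we have $S\neq\emptyset$; conversely every nonempty $S$ with $\langle S\rangle$ non-connected gives a cut vertex set $X=V\setminus S$ (including the case $S=V$ when $\G$ itself is non-connected, producing $X=\emptyset$). The map and its inverse are clearly well-defined and mutually inverse.

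Finally, partition the $2^n-1$ nonempty subsets of $V$ into those for which $\langle S\rangle$ is connected and those for which it is not: by part (1) the first class has size $A(\G;1)$, so the second class, and hence the collection of cut vertex sets, has size $2^n-1-A(\G;1)$. The only real care needed is in the corner cases (single-vertex subsets, the subset $S=V$ corresponding to $X=\emptyset$, and the exclusion of $S=\emptyset$ from $A(\G;1)$), which is where I expect the mildest obstacle to lie; everything else is direct bookkeeping.
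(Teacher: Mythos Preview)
Your proposal is correct and follows essentially the same approach as the paper: both evaluate $A(\G;1)$ directly from \eqref{eq:Poly2} for part (1), and both use the complementation $X \leftrightarrow V\setminus X$ between cut vertex sets and nonempty subsets inducing non-connected subgraphs for part (2). The paper phrases the second count cardinality-by-cardinality via $c_{n-k}(\G)+s_k(\G)=\binom{n}{k}$ and then sums, whereas you do it as a single bijection and partition of the $2^n-1$ nonempty subsets, but this is only a cosmetic difference.
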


\begin{proof}
By \eqref{eq:Poly2}, we have
\[
  A(\G;1)= \displaystyle\sum_{S\subset V}  \s_\G(S).
\]
Thus, $A(\G;1)$ is the number of connected induced subgraph $\langle S\rangle$ in $\G$; this amount is less that $2^{n}$, since we have $2^{n}-1$ nonempty subsets of $V$.

Let $c_k(\G)$ be the number of cut vertex sets of cardinality $k$ for $0\le k < n$ and
$s_k(\G)$ be the number of connected induced subgraphs of $\G$ with order $k$ for $0< k\le n$.
Note that $X$ is a cut vertex set if and only if $V(\G) \setminus X$ induces a non-connected subgraph.
Then, we have the following equality for every $0<k\le n$
$$c_{n-k}(\G) + s_k(\G) = {n \choose k}.$$
Finally, we obtain the result since $A(\G;1)=\sum_{k=1}^{n} s_k(\G)$.
\end{proof}

\smallskip

The following theorem shows some properties of coefficients and degree of alliance polynomial.

\begin{theorem}\label{t:properties}
   Let $A(\G;x)$ be the alliance polynomial of a graph $\G$ with $\Deg_{\min}(A(\G;x))$ and $\Deg(A(\G;x))$ the minimum degree and maximum degree of its terms, respectively.
   Then, $A(\G;x)$ satisfies the following statements:

   \begin{enumerate}[i)]
     \medskip

     \item {$\Deg_{\min}(A(\G;x)) = n - \d_1$ and its coefficient $A_{-\d_1}(\G)$ is the number of vertices in $\G$ with degree $\d_1$.
         }

     \medskip

     \item {$A_{-\d_1 + 1}(\G)$ is the number of vertices in $\G$ with degree $\d_1 - 1$.}

     \medskip

     \item {$A_{\d_n}(\G) > 0$.}

     \medskip

     \item {$n + \d_n \leq \Deg(A(\G;x)) \leq n + \d_1$.}

     \medskip

     \item {$A_{\d_1}(\G)$ is equal to the number of connected components in $\G$ which are $\d_1$-regular.}

     \medskip

     \item {There not exist defensive $k$-alliances in $\G$ for $k > \Deg(A(\G;x)) - n$.
     }

%
%
%
%
%
%
%

%

   \end{enumerate}
\end{theorem}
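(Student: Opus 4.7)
The plan is to tackle the six items separately, with the common workhorse being the identity
\[
k_{S}=\min_{v\in S}\bigl(\d_{S}(v)-\d_{\overline{S}}(v)\bigr)=\min_{v\in S}\bigl(2\d_{S}(v)-\d(v)\bigr)
\]
from Remark \ref{r:min_deg}, together with the crude bounds $0\le\d_{S}(v)\le\d(v)\le\d_{1}$.

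For (i), I would observe that $\d_{S}(v)-\d_{\overline{S}}(v)\ge -\d(v)\ge -\d_{1}$ for every $v\in S$, hence $k_{S}\ge -\d_{1}$ and $\Deg_{\min}(A(\G;x))\ge n-\d_{1}$. The extremal case $k_{S}=-\d_{1}$ demands some $v\in S$ with $\d_{S}(v)=0$ and $\d(v)=\d_{1}$; connectedness of $\langle S\rangle$ then collapses $S$ to the singleton $\{v\}$. The converse (every singleton at a maximum-degree vertex is an exact defensive $(-\d_{1})$-alliance) is immediate. For (ii), the same circle of ideas applies: from $2\d_{S}(v)=\d(v)-\d_{1}+1$ and the integrality of $\d_{S}(v)$ I would discard the case $\d(v)=\d_{1}$ (which yields the impossible $\d_{S}(v)=1/2$) and conclude $\d(v)=\d_{1}-1$ with $\d_{S}(v)=0$, so again $S=\{v\}$.

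Items (iii) and (iv) are quick. Item (iii) is already contained in Proposition~\ref{p:AlliPoly}(iv) when $\G$ has at least one edge; the edgeless case is trivial since then $\d_{n}=0$ and every singleton is an exact defensive $0$-alliance. Item (iv) then combines (iii) (lower bound) with the defining constraint $k\in\mathcal{K}=[-\d_{1},\d_{1}]\cap\ZZ$ (upper bound).

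For (v), for any connected exact defensive $\d_{1}$-alliance $S$ and any $v\in S$ I would use the tight sandwich
\[
\d_{1}\le 2\d_{S}(v)-\d(v)\le \d(v)\le \d_{1},
\]
which forces $\d(v)=\d_{1}$, $\d_{S}(v)=\d_{1}$ and $\d_{\overline{S}}(v)=0$ for every $v\in S$. Hence $\langle S\rangle$ is a $\d_{1}$-regular connected component of $\G$, and the converse is clear. Finally, the subtlety in (vi) is that defensive $k$-alliances are not required to be connected, whereas $A(\G;x)$ only counts connected ones; I would show that every connected component $S_{0}$ of a (possibly disconnected) defensive $k$-alliance $S$ is itself a defensive $k$-alliance, because all $S$-neighbours of a vertex $v\in S_{0}$ already lie in $S_{0}$, so $\d_{S_{0}}(v)=\d_{S}(v)$ and $\d_{\overline{S_{0}}}(v)=\d_{\overline{S}}(v)$. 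Then $k_{S_{0}}\ge k$, $A_{k_{S_{0}}}(\G)>0$ and so $\Deg(A(\G;x))\ge n+k$, whose contrapositive is exactly (vi). I expect the main obstacle to be the tight-sandwich step of (v) together with the connectedness bookkeeping used in both (v) and (vi).
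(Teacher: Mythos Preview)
Your proposal is correct and follows essentially the same approach as the paper: singletons for the low-degree coefficients, Proposition~\ref{p:AlliPoly}(iv) for item~(iii), and the squeeze $\d_S(v)\ge\d_{\overline S}(v)+\d_1\ge\d_1$ for item~(v). In fact you are slightly more careful than the paper in two spots: you explicitly treat the edgeless case in (iii), where Proposition~\ref{p:AlliPoly}(iv) does not directly apply, and in (vi) you address the passage from a possibly disconnected defensive $k$-alliance to a connected one by taking a component---a point the paper glosses over when it asserts that $x^{n+k_S}$ appears in $A(\G;x)$.
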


\begin{proof}
We prove separately each item.
\begin{enumerate}[i)]
  \item {The minimum value of $\mathcal{K}$ is $-\d_1$, so $\Deg_{\min}(A(\G;x)) \ge n - \d_1$.
      Consider now the sets $S_v=\{v\}$ with $\d_{\G}(v)=\d_1$, then $\langle S_v\rangle$ is connected and $S_v$ is an exact defensive ($-\d_1$)-alliance.
      Finally, it is clear that any $S\in V$ with more than one vertex is not an exact defensive ($-\d_1$)-alliance, since for any $v\in S$ we have
      \begin{equation}\label{eq:i}
      \d_{S}(v) - \d_{\overline{S}}(v) \geq  1 - (\d_1 - 1) > -\d_1 + 1.
      \end{equation}
      Then, $A_{-\d_1}(\G)$ is the number of vertices in $\G$ with degree $\d_1$.
      Note that, consequently, $A_{-\d_1}(\G) \le n$ and $A_{-\d_1}(\G) = n$ if and only if $\G$ is a regular graph.}

  \medskip

  \item {Similarly to the previous item, we consider the sets $S_v=\{v\}$ with $\d_{\G}(v)=\d_1 - 1$ and we obtain $A_{-\d_1 + 1}(\G) \geq NV_{\d_1 - 1}$ where $NV_{i}:= \{ \text{number of vertices in } \G \text{ with degree } i\}$; therefore, we obtain the equality since any $S\subset V$ with more than one vertex is an exact defensive $k$-alliance for $k \geq -\d_1 + 2$ by \eqref{eq:i}.
      }

  \medskip

  \item {This is a consequence of Proposition \ref{p:AlliPoly} iv).}

  \medskip

  \item {Item iii) gives the first inequality. The second one holds since $\d_1$ is the maximum value of $\mathcal{K}$.}

  \medskip

  \item {By \eqref{eq:Poly1}, $A_{\d_1}(\G)$ is the number of defensive $\d_1$-alliance in $\G$. 
      First, note that if $S$ is a defensive $\d_1$-alliance, then $S$ is an exact defensive $\d_1$-alliance since $\d_1$ is the maximum value in $\mathcal{K}$. Clearly, any connected component in $\G$ which is $\d_1$-regular is an exact defensive $\d_1$-alliance.

      Now, consider an exact defensive $\d_1$-alliance $S$ in $\G$. Hence, for any $v\in S$ we have
      \[
      \d_S(v) \geq \d_{\overline{S}}(v) + \d_1 \quad \Longrightarrow \quad \d_1 \geq \d_S(v) \geq \d_{\overline{S}}(v) + \d_1 \geq \d_1.
      \]
      Then, we have $\d_{S}(v) = \d_{\G}(v)= \d_1$ for every $v\in S$ and conclude that $S$ is a connected component in $\G$ which is $\d_1$-regular.
      }

  \medskip

  \item {Suppose that there is a defensive $k$-alliance $S$ in $\G$, in fact, $k_{S} \geq k$. Then, that makes appear the term $x^{n + k_{S}}$ in $A(\G;x)$ and so,
      \[
      n + k \leq n + k_{S} \leq \Deg(A(\G;x)).
      \]
      }
\end{enumerate}
\end{proof}

\begin{proposition}\label{c:ConComp}
   Let $\G$ be any connected graph. Then, $\G$ is regular if and only if
   \begin{equation}\label{eq:ConComp}
      A_{\d_1}(\G) = 1.
   \end{equation}
\end{proposition}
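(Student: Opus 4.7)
The plan is to deduce this proposition as a direct corollary of Theorem \ref{t:properties}, item v), which asserts that $A_{\d_1}(\G)$ counts exactly the number of connected components of $\G$ that are $\d_1$-regular. Since $\G$ is assumed connected, it has precisely one connected component, namely $\G$ itself, so the count $A_{\d_1}(\G)$ can only take the values $0$ or $1$, and it equals $1$ if and only if $\G$ (viewed as its unique connected component) is $\d_1$-regular, which is the same as saying that $\G$ is regular.

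More explicitly, I would split the argument into the two implications. For the forward direction, assume $\G$ is regular; then every vertex has degree $\d_1$, so $\G$ itself is a $\d_1$-regular connected component, contributing exactly one to the count given by item v) of Theorem \ref{t:properties}, giving $A_{\d_1}(\G)=1$. For the converse, assume $A_{\d_1}(\G)=1$; by item v) of Theorem \ref{t:properties} there is exactly one $\d_1$-regular connected component of $\G$, and since $\G$ is connected this component must be all of $\G$, hence $\G$ is $\d_1$-regular and therefore regular.

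There is essentially no obstacle here: the proposition is a reformulation of the already-established item v) of Theorem \ref{t:properties} under the extra hypothesis of connectedness. The only thing worth double-checking is that connectedness truly forces the unique component to coincide with $\G$ (so that regularity of the component equals regularity of $\G$), which is immediate from the definition of connected component.
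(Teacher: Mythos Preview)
Your proof is correct and follows essentially the same approach as the paper: both directions rely on Theorem~\ref{t:properties}~v). The only minor difference is that for the converse the paper re-derives from scratch that an exact defensive $\delta_1$-alliance $S$ must satisfy $\delta_S(v)=\delta_1$ for all $v\in S$ and then invokes connectivity, whereas you apply the statement of Theorem~\ref{t:properties}~v) as a black box; your version is slightly cleaner but not substantively different.
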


\begin{proof}
   If $\G$ is regular, then by Theorem \ref{t:properties} v) we obtain $A_{\d_1}(\G)=1$.
   Besides, if $A_{\d_1}(\G)=1$, then there is an exact defensive $\d_1$-alliance $S$ in $\G$ with $\d_S(v) \ge \d_{\bar{S}}(v) + \d_1 \ge \d_1$ for every $v\in S$ (i.e., $\d_S(v)=\d_1$ for every $v\in S$).
   So, the connectivity of $\G$ gives that $\G$ is a $\d_1$-regular graph.
\end{proof}

\begin{proposition}\label{l:subgraph}
   Let $\G$ be any graph and $G$ any proper subgraph of $\G$. Then
   \begin{equation*}
      A(\G;x) \neq A(G;x).
   \end{equation*}
\end{proposition}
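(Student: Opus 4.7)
The plan is to prove $A(\G;x) \neq A(G;x)$ by establishing the strict inequality $A(G;1) < A(\G;1)$, which is sufficient since distinct evaluations imply distinct polynomials. By Theorem \ref{th:Eval1}, $A(\G;1)$ equals the number of nonempty subsets $T\subseteq V(\G)$ whose induced subgraph $\langle T\rangle$ in $\G$ is connected, and analogously for $A(G;1)$; so the task reduces to a counting comparison between the families of connected induced subgraphs of $G$ and of $\G$.

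First I would exhibit a natural injection from one family to the other. Since $G$ is a subgraph of $\G$, we have $V(G) \subseteq V(\G)$ and $E(G) \subseteq E(\G)$, hence for every $S\subseteq V(G)$ the containment $E(\langle S\rangle_G) \subseteq E(\langle S\rangle_\G)$ holds; connectivity is preserved under adding edges, so connectedness of $\langle S\rangle_G$ implies connectedness of $\langle S\rangle_\G$. This makes the identity map a well-defined injection from the connected induced subgraphs of $G$ into those of $\G$.

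Next I would exhibit an element of the target that is missing from the image, splitting into the two ways in which $G$ can be a proper subgraph of $\G$. If $V(G) \subsetneq V(\G)$, choose any $v \in V(\G) \setminus V(G)$; the singleton $\{v\}$ is a connected induced subgraph of $\G$ that is not a subset of $V(G)$, so it is not in the image. Otherwise $V(G) = V(\G)$ and $E(G) \subsetneq E(\G)$, so there exists an edge $uv \in E(\G) \setminus E(G)$; the subset $\{u,v\}$ induces a connected subgraph in $\G$ (containing the edge $uv$) but an edgeless and hence disconnected subgraph in $G$, so $\{u,v\}$ is again not in the image. In either case the injection fails to be surjective, yielding $A(G;1) < A(\G;1)$ and hence $A(\G;x) \neq A(G;x)$.

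There is no serious technical obstacle; the argument is essentially a combinatorial comparison of two sets of connected induced subgraphs. The only points requiring a little care are separating the two flavors of proper subgraph (strict vertex-containment versus same vertex set with fewer edges) and correctly distinguishing $\langle S\rangle_G$ from $\langle S\rangle_\G$ when arguing that connectedness transfers from $G$ to $\G$ but not necessarily back.
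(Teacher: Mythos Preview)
Your proposal is correct and follows essentially the same route as the paper: both arguments compare $A(G;1)$ and $A(\G;1)$ via Theorem~\ref{th:Eval1}, observe that every connected induced subgraph of $G$ remains one in $\G$, and then exhibit an extra connected induced subgraph of $\G$. The paper's proof simply picks an edge $uv\in E(\G)\setminus E(G)$ and uses $\{u,v\}$; your version is slightly more careful in that it also treats the case $V(G)\subsetneq V(\G)$ with $E(G)=E(\G)$ (removal of an isolated vertex) via a singleton, a case the paper's wording glosses over.
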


\begin{proof}
   Since $G$ is a proper subgraph of $\G$, all connected induced subgraph of $G$ is a connected induced subgraph of $\G$ and at less one edge $e$ (with endpoints $u,v\in V$) of $\G$ is not contained in $G$.
   Hence, since $\langle \{u,v\}\rangle$ is connected in $\G$ but is no connected in $G$, we have $A(\G;1) > A(G;1)$ by Theorem \ref{th:Eval1}.
\end{proof}

The \emph{disjoint union} of graphs, sometimes referred to as simply \emph{graph union} is defined as follows. For two graphs $\G_1=(V_1,E_1)$ and $\G_2=(V_2,E_2)$ with disjoint vertex sets $V_1$ and $V_2$ (and hence disjoint edge sets), their union is the graph $\G_1\cup \G_2:=(V_1\cup V_2, E_1 \cup E_2)$. It is a commutative and associative operation.

\begin{theorem}\label{t:disjoints}
 Let $\G=\G_1 \cup \ldots \cup \G_r$ be the disjoint union of the graphs $\G_1,\ldots,\G_r$ $(r\ge2)$ with orders $n_1,\ldots,n_r$, respectively. Then we have
 \begin{equation}\label{eq:disjoints}
    A(\G;x) = x^{n - n_1} A(\G_1;x) + \ldots + x^{n-n_r} A(\G_r;x),
 \end{equation}
 where $n:=n_1+\ldots+n_r$.
\end{theorem}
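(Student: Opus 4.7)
The plan is to work directly from the sum definition \eqref{eq:Poly2} of the alliance polynomial and split it according to which component a connected subgraph lives in. First I would observe that if $S \subseteq V(\G)$ is such that $\langle S\rangle$ is nonempty and connected in $\G$, then $S$ must be contained entirely in $V(\G_i)$ for some unique index $i\in\{1,\dots,r\}$, since vertices from different components of the disjoint union are in different connected components of $\langle S\rangle$. Consequently, the set of nonempty connected induced subgraphs of $\G$ partitions into $r$ classes according to the component containing $S$, and the sum defining $A(\G;x)$ splits as
\begin{equation*}
A(\G;x) \;=\; \sum_{i=1}^{r}\;\sum_{\substack{S\subseteq V(\G_i)\\ \s_{\G_i}(S)=1}} x^{\,n+k_S^{\,\G}},
\end{equation*}
where $k_S^{\,\G}$ denotes the exact index of alliance of $S$ computed in $\G$.

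The second step is to show that $k_S^{\,\G}=k_S^{\,\G_i}$ whenever $S\subseteq V(\G_i)$. For any $v\in S$, since $v$ has no neighbors in $V(\G_j)$ for $j\neq i$, we have $\d_S(v)$ equal whether it is computed in $\G$ or in $\G_i$, and likewise $\d_{\overline S}(v)$ computed in $\G$ equals $\d_{V(\G_i)\setminus S}(v)$ computed in $\G_i$. By Remark \ref{r:min_deg}, the exact index is the minimum over $v\in S$ of $\d_S(v)-\d_{\overline S}(v)$, so the two values coincide.

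With these two observations in hand, I would factor $x^n=x^{\,n-n_i}\cdot x^{n_i}$ in the $i$-th inner sum to obtain
\begin{equation*}
\sum_{\substack{S\subseteq V(\G_i)\\ \s_{\G_i}(S)=1}} x^{\,n+k_S^{\,\G_i}}
\;=\; x^{\,n-n_i}\sum_{S\subseteq V(\G_i)} \s_{\G_i}(S)\,x^{\,n_i+k_S^{\,\G_i}}
\;=\; x^{\,n-n_i}\,A(\G_i;x),
\end{equation*}
and summing over $i$ yields exactly \eqref{eq:disjoints}.

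I do not expect a serious obstacle here: the only subtlety is the check that the exact index $k_S$ is intrinsic to the component containing $S$, which is immediate from Remark \ref{r:min_deg} together with the observation that neighborhoods across components of a disjoint union are empty. The rest is pure bookkeeping in the sum formula.
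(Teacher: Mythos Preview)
Your proof is correct and essentially the same as the paper's: both rely on the observation that a connected induced subgraph of a disjoint union lies entirely in one component and that the exact index $k_S$ is unchanged when computed in that component. The only cosmetic difference is that you work directly from the sum formula \eqref{eq:Poly2}, whereas the paper phrases the argument via the coefficient identity $A_k(\G)=A_k(\G_1)+\cdots+A_k(\G_r)$ from \eqref{eq:Poly1}; your justification of $k_S^{\G}=k_S^{\G_i}$ via Remark~\ref{r:min_deg} is in fact more explicit than the paper's.
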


\begin{proof}
   Since all connected induced subgraph of $\G$ is a connected induced subgraph of $\G_i$ for some $1\le i \le r$, and all exact defensive $k$-alliance in $\G$ is an exact defensive $k$-alliance in $\G_i$ for some $1\le i \le r$; we have that $\mathcal{K}(\G) = \bigcup_{i=1}^{r} \mathcal{K}(\G_i)$ and
   \[
   A_{k}(\G) = A_{k}(\G_1) + \ldots + A_{k}(\G_r), \quad \text{ for  } k\in \mathcal{K}(\G).
   \]

   So, we have
   \[
   A_{k}(\G) x^{n + k} = x^{n-n_1} A_{k}(\G_1) x^{n_1 + k} + \ldots + x^{n-n_r} A_{k}(\G_r) x^{n_r + k}, \quad \text{ for  } k\in \mathcal{K}(\G).
   \]
   Finally, if we sum in $k\in \mathcal{K}(\G)$, then we obtain the result.
\end{proof}


This result allows to obtain the alliance polynomial of the graph $\G\cup \{v\}$ obtained by adding to the graph $\G$ a single disjoint vertex $v$ (i.e., $v\notin V(\G)$). This operation is called \emph{vertex addition}.

\begin{corollary}\label{c:G+v}
Let $\G$ be any graph with order $n$ and let $v$ be a vertex such that $v\notin V(\G)$. Then
\[
A(\G\cup \{v\};x)= x\, A(\G;x)+ x^{n+1}.
\]
\end{corollary}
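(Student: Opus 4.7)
The plan is to apply Theorem \ref{t:disjoints} directly. Observe that $\G \cup \{v\}$ is precisely the disjoint union of the two graphs $\G_1 := \G$ (of order $n_1 = n$) and $\G_2 := \langle \{v\} \rangle$ (of order $n_2 = 1$), so the order of $\G \cup \{v\}$ equals $n+1$. Substituting into \eqref{eq:disjoints} gives
\[
A(\G\cup \{v\};x) = x^{(n+1)-n}\,A(\G;x) + x^{(n+1)-1}\,A(\langle\{v\}\rangle;x) = x\,A(\G;x) + x^{n}\,A(\langle\{v\}\rangle;x).
\]

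It remains to compute $A(\langle\{v\}\rangle;x)$. The single vertex $v$ has degree $0$, so $\mathcal{K} = \{0\}$ and the only nonempty subset of $\{v\}$ is $\{v\}$ itself, which induces a connected subgraph. Since $\d_{\{v\}}(v) = 0 = \d_{\emptyset}(v)$, Remark \ref{r:min_deg} yields $k_{\{v\}} = 0$, so $A(\langle\{v\}\rangle;x) = x^{1+0} = x$. Plugging this back in gives $A(\G\cup\{v\};x) = x\,A(\G;x) + x^n \cdot x = x\,A(\G;x) + x^{n+1}$, as required.

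There is essentially no obstacle here: the corollary is a direct specialization of Theorem \ref{t:disjoints}, and the only small computation needed is identifying the alliance polynomial of the trivial one-vertex graph, which is immediate from the definition.
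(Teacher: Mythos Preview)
Your proof is correct and is exactly the approach intended by the paper: the corollary is stated there without proof, immediately after Theorem~\ref{t:disjoints}, as a direct specialization to $\G_2=\langle\{v\}\rangle$. The only extra detail you supply is the computation $A(\langle\{v\}\rangle;x)=x$, which is immediate.
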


The $n$-vertex edgeless graph or \emph{empty graph} is the complement graph for the complete graph $K_n$; it is commonly denoted as $E_n$ for $n\ge1$.


\begin{corollary}\label{c:vacio}
 Let $n$ be a natural number with $n\ge1$.  If $A(\G; x) = n x^n$, then $\G$ is an isomorphic graph to $E_n$.
\end{corollary}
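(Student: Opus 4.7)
The plan is to read off both the order of $\G$ and its maximum degree directly from the monomial form $A(\G;x) = n x^n$ by combining the extremal-exponent bounds collected in Theorem \ref{t:properties}. Writing $m := |V(\G)|$ to avoid clashing with the $n$ in the hypothesis, the assumption says that the polynomial consists of a single term, so its minimum and maximum exponents both equal $n$; I would feed this equality into items i) and iv) of Theorem \ref{t:properties} to force $m = n$ and $\delta_1 = 0$ simultaneously.

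More precisely, item i) of Theorem \ref{t:properties} gives $\Deg_{\min}(A(\G;x)) = m - \delta_1$, and the monomial form forces $m - \delta_1 = n$, hence $m \geq n$. In the other direction, item iv) gives $\Deg(A(\G;x)) \geq m + \delta_m$, where $\delta_m$ denotes the minimum degree of $\G$; since $\delta_m \geq 0$ and the maximum exponent of $A(\G;x)$ is $n$, this forces $n \geq m$. Combining the two inequalities yields $m = n$ and $\delta_1 = 0$.

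Finally, $\delta_1 = 0$ means that no vertex of $\G$ has a neighbour, so $\G$ is edgeless, and together with $|V(\G)| = n$ this gives $\G \simeq E_n$ by the definition of the edgeless graph. I do not anticipate a genuine obstacle: the whole argument reduces to the observation that the hypothesis collapses the available window of exponents $[m - \delta_1,\, m + \delta_1]$ to the single value $n$, which can only happen when $\delta_1 = 0$ and $m = n$. As a sanity check one could also verify in parallel that iterating Corollary \ref{c:G+v} starting from $A(E_1;x) = x$ indeed produces $A(E_n;x) = n x^n$, confirming that the hypothesis is nonvacuous and that $E_n$ is the unique preimage.
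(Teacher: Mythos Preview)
Your argument is correct and follows essentially the same approach as the paper: both extract the order and the maximum degree of $\G$ from the extremal exponents of the monomial $nx^n$ via Theorem~\ref{t:properties}. The paper first verifies $A(E_n;x)=nx^n$ by iterating Corollary~\ref{c:G+v} and then appeals to items iii) and iv) of Theorem~\ref{t:properties} for uniqueness, whereas you invoke items i) and iv); your choice of i) is actually the cleaner one, since it gives $\Deg_{\min}(A(\G;x))=m-\delta_1$ directly and makes the two-sided squeeze $n\le m\le n$ transparent.
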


\begin{proof}
   Note that the empty graph $E_1$ satisfies $A(E_1;x) = x$.
   So, by Theorem \ref{t:disjoints} or Corollary \ref{c:G+v} we have that
   \[
   A(E_{n+1}) = x A(E_n;x) + x^{n+1}, \quad \forall n \geq 1.
   \]

   This implies that $A(E_n;x)= n x^n$.
   The uniqueness follows from items iii) and iv) in Theorem \ref{t:properties}.
\end{proof}

\begin{corollary}\label{c:G+Em}
Let $\G$ be any graph with order $n$. Then
\[
A(\G\cup E_m;x)= x^m\, A(\G;x)+ mx^{n+m}.
\]
\end{corollary}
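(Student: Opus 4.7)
The plan is to recognize this as an immediate consequence of Theorem \ref{t:disjoints} together with the explicit formula $A(E_m; x) = m x^m$ that was established in the proof of Corollary \ref{c:vacio}. The disjoint union $\G \cup E_m$ consists of two components of orders $n$ and $m$, respectively, with total order $n + m$.

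First I would record that Corollary \ref{c:vacio} (more precisely, the calculation in its proof) yields
\[
A(E_m; x) = m x^m.
\]
Then I would apply Theorem \ref{t:disjoints} with $r = 2$, $\G_1 = \G$, $\G_2 = E_m$, $n_1 = n$, $n_2 = m$, so that the total order is $n + m$. This gives
\[
A(\G \cup E_m; x) = x^{(n+m) - n} A(\G; x) + x^{(n+m) - m} A(E_m; x) = x^m A(\G; x) + x^n \cdot m x^m,
\]
which simplifies to the desired formula $x^m A(\G; x) + m x^{n+m}$.

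As a cross-check (and alternative route) one could instead proceed by induction on $m$ using Corollary \ref{c:G+v}: the base case $m = 1$ is exactly that corollary; for the inductive step, view $\G \cup E_m$ as $(\G \cup E_{m-1}) \cup \{v\}$ where $v$ is a fresh isolated vertex, apply Corollary \ref{c:G+v} to add $v$, and then invoke the inductive hypothesis on $A(\G \cup E_{m-1}; x)$. Both approaches are essentially routine, so there is no real obstacle; the only care needed is in tracking the exponents of $x^{n+m - n_i}$ that Theorem \ref{t:disjoints} prescribes.
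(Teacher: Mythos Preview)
Your proposal is correct and matches the paper's intended approach: the paper states Corollary~\ref{c:G+Em} without proof, as an immediate consequence of Theorem~\ref{t:disjoints} (or equivalently iterated Corollary~\ref{c:G+v}) together with $A(E_m;x)=mx^m$ from Corollary~\ref{c:vacio}. There is nothing to add.
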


\medskip

The \emph{graph join} $\G_1\uplus \G_2$ of two graphs is their graph union with all the edges that connect the vertices of the first graph $\G_1$ with the vertices of the second graph $\G_2$. It is a commutative operation.

\begin{theorem}\label{t:join}
  Let $\G_1,\G_2$ be two graphs with order $n_1$ and $n_2$, respectively. Then
  \[
  A(\G_1\uplus \G_2;x)= A(\G_1;x) + A(\G_2;x)+ \widetilde{A}(\G_1,\G_2;x),
  \]
  where $\widetilde{A}(\G_1,\G_2;x)$ is a polynomials with $\widetilde{A}(\G_1,\G_2;1)=(2^{n_1}-1)(2^{n_2}-1)$ and $Deg\big(\widetilde{A}(\G_1,\G_2;x)\big)=Deg\big(A(\G_1\cup \G_2;x)\big)$.
\end{theorem}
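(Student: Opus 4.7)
The plan is to split the sum defining $A(\G_1\uplus\G_2;x)$ according to how each connected induced subgraph $\langle S\rangle$ meets the bipartition $(V_1,V_2)$: $S\subseteq V_1$, $S\subseteq V_2$, or $S$ straddles both sides. The key structural point is that every straddling $S$ is automatically connected in $\G_1\uplus\G_2$, since the join makes every pair in $V_1\times V_2$ adjacent, so any two vertices of $S$ can be joined by a two-edge path through a vertex on the opposite side.

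For $S\subseteq V_1$, the induced subgraph $\langle S\rangle$ coincides in $\G_1$ and in $\G_1\uplus\G_2$, so connectedness is inherited. The join contributes exactly $n_2$ extra neighbours to every $v\in S$, all lying in the complement, so Remark~\ref{r:min_deg} gives
\[
k_S^{\G_1\uplus\G_2} \;=\; \min_{v\in S}\bigl(\delta_{S,\G_1}(v)-\delta_{\overline{S},\G_1}(v)-n_2\bigr) \;=\; k_S^{\G_1}-n_2.
\]
Hence the contribution of $S$ to $A(\G_1\uplus\G_2;x)$ is $x^{(n_1+n_2)+k_S^{\G_1}-n_2}=x^{n_1+k_S^{\G_1}}$, exactly its contribution to $A(\G_1;x)$; summing recovers $A(\G_1;x)$, and symmetrically $A(\G_2;x)$ from $S\subseteq V_2$. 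Defining $\widetilde{A}(\G_1,\G_2;x)$ as the sum of $x^{(n_1+n_2)+k_S^{\G_1\uplus\G_2}}$ over straddling $S$ yields the claimed decomposition. Evaluating at $x=1$ and using Theorem~\ref{th:Eval1}, $\widetilde{A}(\G_1,\G_2;1)$ counts the straddling subsets, which is $(2^{n_1}-1)(2^{n_2}-1)$ by the bijection with pairs of nonempty subsets of $V_1$ and $V_2$.

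For the degree claim, the plan is to prove that the maximum exact index of alliance of a connected induced subgraph of $\G_1\uplus\G_2$ is always realised by a straddling set; the opposite inequality is automatic from the decomposition. Starting from any connected $S\subseteq V_1$, I would exhibit a straddling $S'$ with $k_{S'}^{\G_1\uplus\G_2}\ge k_S^{\G_1}-n_2$ (the case $S\subseteq V_2$ is symmetric). Two candidates cover complementary regimes. The enlargement $S'=S\cup V_2$ raises $\delta_{S'}(v)-\delta_{\overline{S'}}(v)$ at each $v\in S$ by $+2n_2$, while each $u\in V_2$ contributes $2|S|-n_1+\delta_{\G_2}(u)$, so this choice wins when $|S|$ is large. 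The full set $S'=V$ yields $k_V$ equal to the minimum degree of $\G_1\uplus\G_2$, namely $\min(\delta_n(\G_1)+n_2,\delta_n(\G_2)+n_1)$, which dominates the small value of $k_S^{\G_1}-n_2$ forced when $|S|$ is small.

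The hard part will be closing the case analysis: showing that whenever both $S\cup V_2$ and $V$ fail the required inequality, the combined constraints contradict the elementary bound $k_S^{\G_1}\le 2\delta_{S,\G_1}(v_0)-\delta_{\G_1}(v_0)\le 2|S|-2-\delta_{\G_1}(v_0)$ at a min-achieving vertex $v_0\in S$. Should a residual case remain, I would introduce a hybrid candidate $S\cup V_2\cup T$ for a carefully chosen $T\subseteq V_1\setminus S$, using that adding $V_1$-vertices raises the $V_2$-side contribution to $2|S\cup T|+\delta_n(\G_2)-n_1$ without decreasing the bound at each $v\in S$.
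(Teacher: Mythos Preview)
Your treatment of the decomposition $A(\G_1\uplus\G_2;x)=A(\G_1;x)+A(\G_2;x)+\widetilde{A}(\G_1,\G_2;x)$ and of the evaluation $\widetilde{A}(\G_1,\G_2;1)=(2^{n_1}-1)(2^{n_2}-1)$ agrees with the paper's argument: both split the sum according to whether $S\subseteq V_1$, $S\subseteq V_2$, or $S$ straddles, observe that every straddling $S$ is connected in the join, and check that for $S\subseteq V_i$ the exponent shifts by $-n_{3-i}$ so that the one-sided contributions reproduce $A(\G_i;x)$ exactly.

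For the degree assertion, however, you are aiming at the wrong target. Your plan (``the maximum exact index of a connected induced subgraph of $\G_1\uplus\G_2$ is always realised by a straddling set; the opposite inequality is automatic from the decomposition'') would establish
\[
\Deg\big(\widetilde{A}(\G_1,\G_2;x)\big)=\Deg\big(A(\G_1\uplus\G_2;x)\big),
\]
whereas the theorem claims equality with $\Deg\big(A(\G_1\cup\G_2;x)\big)$, the \emph{disjoint union}. These two degrees differ in general: for $\G_1=\G_2=K_2$ one has $\G_1\uplus\G_2=K_4$, and
\[
\widetilde{A}(K_2,K_2;x)=A(K_4;x)-2A(K_2;x)=x^{7}+4x^{5}+4x^{3},
\]
of degree $7$, while $A(K_2\cup K_2;x)=4x^{3}+2x^{5}$ has degree $5$. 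Thus even a completed version of your case analysis on $S\cup V_2$, $V$, and hybrid enlargements would not yield the stated conclusion.

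The paper's route is different: it asserts that the maximal straddling exponent arises from $R_1^*\cup R_2^*$ with $R_i^*$ a connected subset of $V_i$ of highest exact index in $\G_i$, giving $\Deg(\widetilde{A})=n_1+n_2+\max\{k_{R_1^*},k_{R_2^*}\}$, and then matches this with $\Deg\big(A(\G_1\cup\G_2;x)\big)$ via Theorem~\ref{t:disjoints}. The $K_2,K_2$ example shows that this ``obvious'' assertion is itself not correct, so the divergence between your argument and the paper's is not merely a difference of method: you are (partially) proving a true statement that is not the one claimed, while the paper argues directly for the claimed identity, which fails in this example.
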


\begin{proof}
Let us define $\widetilde{A}(\G_1,\G_2;x)= A(\G_1\uplus \G_2;x) - A(\G_1;x) - A(\G_2;x)$.
First, if $S_1$ is a defensive alliance in $\G_1$ which provides a term $x^{n_1+k_{S_1}}$ in $A(\G_1;x)$, then $S_1$ provides a term $x^{n_1+n_2+k_{S_1}-n_2}=x^{n_1+k_{S_1}}$ in $A(\G_1\uplus \G_2;x)$.
It follows immediately that we obtain $A(\G_1;x)$ as an addend in $A(\G_1\uplus \G_2;x)$ when $S_1$ runs on the defensive alliances in $\G_1$.
Similarly, we obtain $A(\G_2;x)$ as an addend in $A(\G_1\uplus \G_2;x)$ when we consider the defensive alliances in $\G_2$.

In order to complete the summation in $A(\G_1\uplus \G_2;x)$ we consider $R_1 \subseteq V(G_1)$
(being either a defensive alliance in $\G_1$ or not) with $1\le r_1\le n_1$ elements and $R_2\subseteq V(\G_2)$ (being either a defensive alliance in $\G_2$ or not) with $1\le r_2\le n_2$ elements. Note that any $R_1\cup R_2$ is a defensive alliance in $\G_1\uplus \G_2$ since $\langle R_1\cup R_2\rangle$ is connected.
By Theorem \ref{th:Eval1}, we have
$$
\widetilde{A}(\G_1,\G_2;1)=\displaystyle\sum_{i=1}^{n_1} \sum_{j=1}^{n_2} {n_1 \choose i}{n_2 \choose j}=\left( \sum_{i=1}^{n_1} {n_1 \choose i} \right) \left( \sum_{j=1}^{n_2} {n_2 \choose j} \right) = (2^{n_1}-1)(2^{n_2}-1).
$$
However, the exact index of alliance of $R_1\cup R_2$ in $\G_1\uplus \G_2$ depends strongly on the particular geometry (topology) of $\G_1$ and $\G_2$.
In general, we can not determine the exact index of alliance of $R_1\cup R_2$ given its cardinality and degree sequence.

It is obvious that terms in $A(\G_1\uplus \G_2;x)$ provided from every $R_1\cup R_2$ with maximum degree are obtained from $R_1^*$ and $R_2^*$ defensive alliances with $\langle R_1^* \rangle,\langle R_2^* \rangle$ connected subgraphs and highest exact index of alliance in $\G_1$ and $\G_2$, respectively.
Hence, $$Deg\big(\widetilde{A}(\G_1,\G_2;x)\big)=n_1+n_2+\max\{k_{R_1^*},k_{R_2^*}\},$$
where the maximum is taken over all $R_1^*$, $R_2^*$ defensive alliances in $\G_1$, $\G_2$, respectively.
So, \eqref{eq:disjoints} finishes the proof since
\[
Deg\big( A(\G_1\cup \G_2;x) \big) = \max\left\{n_2 + Deg\big(A(\G_1;x)\big) , n_1 + Deg\big(A(\G_2;x)\big)\right\} = n_1 + n_2 + \max\{k_{R_1^*},k_{R_2^*}\},
\]
where the maximum is taken over all $R_1^*$, $R_2^*$ defensive alliances in $\G_1$, $\G_2$, respectively.
\end{proof}

Theorem \ref{t:join} allows to obtain the following result which will be useful (see Section \ref{ss:Polynomials for complete graphs}).
We denote by $\overline{\G}$ the \emph{complement graph} of $\G$ (note that $\overline{K}_n$ is isomorphic to the empty graph $E_n$).

\begin{theorem}\label{t:join2}
   Let $n,m$ be two positive integers. Then we have
   \begin{equation}\label{eq:join2}
     A(K_n\uplus \overline{K}_m;x)=A(K_n;x) \widetilde{A}_{m}(x) + m x^m
   \end{equation}
   where $\widetilde{A}_m(x)$ is a polynomial which just depend of $m$, in fact,
   \[
   \widetilde{A}_{m}(x)= \displaystyle\sum_{r=0}^{m} {m\choose r} x^{\min\{2r,m+1\}}.
   \]
\end{theorem}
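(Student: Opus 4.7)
The plan is to partition every subset $S \subseteq V(K_n \uplus \overline{K}_m)$ as $S = S_K \cup S_E$ with $S_K \subseteq V(K_n)$ and $S_E \subseteq V(\overline{K}_m)$, and then sort out which choices contribute to the alliance polynomial and with what exponent. First I would observe that, because of the join edges, every vertex of $S_E$ is adjacent to every vertex of $S_K$; hence $\langle S\rangle$ is connected exactly when $S_K\neq\emptyset$, and the only other connected inputs are the singletons $\{v\}$ with $v\in V(\overline{K}_m)$. Each such singleton has $\d_S(v)=0$ and $\d_{\overline{S}}(v)=n$, so by Remark \ref{r:min_deg} its exact index is $-n$; the $m$ singletons together contribute exactly $m\,x^{(n+m)-n}=mx^{m}$, which accounts for the additive $mx^m$ in the statement.

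Next, I would handle the bulk case $|S_K|=s\geq 1$, $|S_E|=r\geq 0$. A direct count in $K_n\uplus\overline{K}_m$ gives, for $v\in S_K$, $\d_S(v)-\d_{\overline{S}}(v)=(s-1+r)-(n-s+m-r)=2s+2r-1-n-m$, and for $v\in S_E$ (present only if $r\geq 1$), $\d_S(v)-\d_{\overline{S}}(v)=s-(n-s)=2s-n$. Applying Remark \ref{r:min_deg},
\[
k_S=\min\{2s+2r-1-n-m,\;2s-n\},
\]
where the formula is consistent even for $r=0$ (the $S_K$-expression is already the smaller one in that range). Consequently the exponent is
\[
(n+m)+k_S=\min\{2s+2r-1,\;2s+m\}=2s-1+\min\{2r,\;m+1\}.
\]

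Finally, I would compute $A(K_n;x)$ once and for all: in $K_n$ every nonempty $S$ with $|S|=s$ satisfies $\d_S(v)-\d_{\overline{S}}(v)=2s-1-n$ and $\langle S\rangle$ is connected, so $A(K_n;x)=\sum_{s=1}^{n}\binom{n}{s}x^{2s-1}$. Summing the bulk-case contribution over all $(s,r)$ yields
\[
\sum_{s=1}^{n}\sum_{r=0}^{m}\binom{n}{s}\binom{m}{r}\,x^{2s-1+\min\{2r,m+1\}}=\Bigl(\sum_{s=1}^{n}\binom{n}{s}x^{2s-1}\Bigr)\Bigl(\sum_{r=0}^{m}\binom{m}{r}x^{\min\{2r,m+1\}}\Bigr)=A(K_n;x)\widetilde{A}_m(x),
\]
and adding the singleton contribution $mx^m$ produces the desired identity.

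The routine part is the degree bookkeeping; the one genuinely delicate point is verifying that the minimum in $k_S$ really is $\min\{2s+2r-1-n-m,2s-n\}$ uniformly in $r$ (including $r=0$, when there are no $S_E$-vertices to inspect), so that the sum factorises cleanly into a single product $A(K_n;x)\,\widetilde{A}_m(x)$ rather than splitting into regimes according to whether $2r\lessgtr m+1$. Once that observation is in place, the rest is algebraic bookkeeping.
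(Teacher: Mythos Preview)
Your argument is correct and follows essentially the same route as the paper's proof: both split the connected induced subsets into the singletons of $\overline{K}_m$ (yielding $mx^{m}$) and the sets with $s\ge 1$ vertices in $K_n$ and $r\ge 0$ in $\overline{K}_m$, compute for the latter the exponent $(n+m)+k_S=2s-1+\min\{2r,m+1\}$, and then factor the resulting double sum as $A(K_n;x)\,\widetilde{A}_m(x)$. Your treatment is slightly more explicit about the connectedness dichotomy and about the $r=0$ boundary case, but the structure and computations coincide with the paper's.
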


\begin{proof}
First, we fix $S\subset V(K_n)$ with $1\le s\le n$ elements.
Note that $S$ provides a term $x^{2s-1}$ in $A(K_n;x)$. Consider $R\subset V(\overline{K}_m)$ with $0\le r\le m$ elements. Now we compute the exact index of alliance of $H_R=S\cup R$ in $K_n\uplus \overline{K}_m$.
We have
$$\d_{H_R}(v)-\d_{\overline{H_R}}(v)=(r+s-1)-(n-s + m-r)= 2s-1 -(n+m) + 2r, \quad \text{ for every } v\in S$$
and
$$\d_{H_R}(v)-\d_{\overline{H_R}}(v)=s-(n-s)= 2s -1 -(n+m)+ m+1, \quad \text{ for every } v\in R.$$
Then, $H_R$ provides a term $x^{2s-1 + \min\{2r,m+1\}}$ for each $R$.
Therefore, for each $S$ we obtain the polynomial $x^{2s-1} \cdot \widetilde{A}_m(x)$ when $R$ runs in the subsets of $V(\overline{K}_m)$.
In order to complete the sum, note that the defensive alliances without elements of $V(K_n)$ are just the set of single vertices of $V(\overline{K}_m)$.
Then \eqref{eq:Poly2} gives the result.
\end{proof}

Also, we can compute the alliance polynomials of $K_n\uplus K_{m}$ (see Proposition \ref{p:Kn}) and $\overline{K}_n\uplus \overline{K}_{m}$ (see Proposition \ref{p:Knm}).

\

\section{Characterization of path, cycle, complete and star graphs by its alliance polynomials}\label{Sec3}

In this section we obtain the explicit formulae for alliance polynomials of some classical classes of graphs using combinatorial arguments. We also study fundamental properties such as unimodality and the uniqueness of these polynomials.

Figure \ref{fig:1} shows two graphs $\G_1$ and $\G_2$ with the same order, size, degree sequence and number of induced subgraphs; however, these graphs have different alliance polynomials.
A simple computation gives $A(\G_1;x)=2x^7+4x^8+27x^9+50x^{10}+11x^{11}$ and $A(\G_2;x)=2x^7+4x^8+30x^9+47x^{10}+11x^{11}$. 


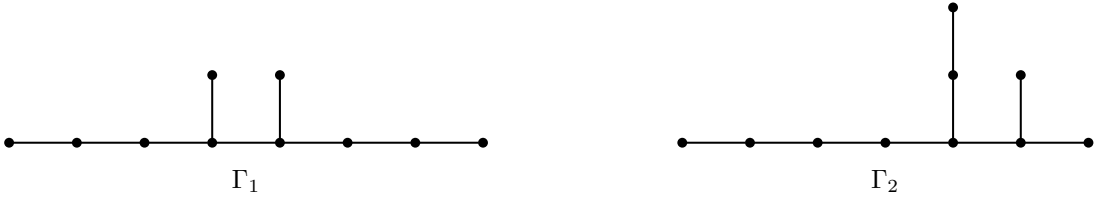
\begin{figure}[h]
\begin{minipage}[t]{8cm}
\centering
\scalebox{.9}
{\begin{pspicture}(-0.2,-0.2)(7.2,2.2)
\psline[linewidth=0.03cm,dotsize=0.07055555cm 2.5]{*-*}(0,0)(1,0)
\psline[linewidth=0.03cm,dotsize=0.07055555cm 2.5]{-*}(1,0)(2,0)
\psline[linewidth=0.03cm,dotsize=0.07055555cm 2.5]{-*}(2,0)(3,0)
\psline[linewidth=0.03cm,dotsize=0.07055555cm 2.5]{-*}(3,0)(4,0)
\psline[linewidth=0.03cm,dotsize=0.07055555cm 2.5]{-*}(4,0)(5,0)
\psline[linewidth=0.03cm,dotsize=0.07055555cm 2.5]{-*}(5,0)(6,0)
\psline[linewidth=0.03cm,dotsize=0.07055555cm 2.5]{-*}(6,0)(7,0)
\psline[linewidth=0.03cm,dotsize=0.07055555cm 2.5]{-*}(4,0)(4,1)
\psline[linewidth=0.03cm,dotsize=0.07055555cm 2.5]{-*}(3,0)(3,1)
\end{pspicture}}
\\
$\G_1$
\end{minipage}
\hfill
\begin{minipage}[t]{8cm}
\centering
\scalebox{.9}
{\begin{pspicture}(-0.2,-0.2)(6.2,2.2)
\psline[linewidth=0.03cm,dotsize=0.07055555cm 2.5]{*-*}(0,0)(1,0)
\psline[linewidth=0.03cm,dotsize=0.07055555cm 2.5]{-*}(1,0)(2,0)
\psline[linewidth=0.03cm,dotsize=0.07055555cm 2.5]{-*}(2,0)(3,0)
\psline[linewidth=0.03cm,dotsize=0.07055555cm 2.5]{-*}(3,0)(4,0)
\psline[linewidth=0.03cm,dotsize=0.07055555cm 2.5]{-*}(4,0)(5,0)
\psline[linewidth=0.03cm,dotsize=0.07055555cm 2.5]{-*}(5,0)(6,0)
\psline[linewidth=0.03cm,dotsize=0.07055555cm 2.5]{-*}(5,0)(5,1)
\psline[linewidth=0.03cm,dotsize=0.07055555cm 2.5]{-*}(4,0)(4,1)
\psline[linewidth=0.03cm,dotsize=0.07055555cm 2.5]{-*}(4,1)(4,2)
\end{pspicture}}
\\
$\G_2$
\end{minipage}
\caption{Graphs with same order, size, degree sequence and number of connected induced subgraphs such that $A(\G_1;x) \neq A(\G_2;x)$.} \label{fig:1}
\end{figure}

\
\subsection{Polynomials for path and cycle graphs}

\begin{proposition}\label{p:Pn}
   Let $P_n$ be a path graph with order $n\geq 2$. Then
   \begin{equation}\label{eq:Pn}
      A(P_n;x) = (n-2) \,x^{n-2} + 2 \,x^{n-1} + \frac{(n-2)(n+1)}2 \, x^n + x^{n+1}.
   \end{equation}

\end{proposition}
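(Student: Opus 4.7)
The plan is to classify all connected induced subgraphs of $P_n$ and, using Remark \ref{r:min_deg}, compute the exact index of alliance $k_S$ for each one; then collect terms.

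Label the vertices $v_1,\dots,v_n$ with $v_i\sim v_{i+1}$. Since a subset $S\subseteq V(P_n)$ induces a connected subgraph if and only if $S$ is a contiguous block $S_{i,j}=\{v_i,v_{i+1},\dots,v_j\}$ with $1\le i\le j\le n$, there are exactly $\binom{n+1}{2}$ terms to account for. For a vertex $v\in S$, the quantity $\delta_S(v)-\delta_{\bar S}(v)$ takes only a few values: it equals $2$ if $v$ is an internal vertex of the block $S_{i,j}$; and at a block-endpoint $v_i$ or $v_j$ it equals $1$ if that vertex is $v_1$ or $v_n$ (a path-endpoint) and $0$ otherwise. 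This is the core computation, and everything else is bookkeeping.

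Next I would split into three cases based on the cardinality and position of $S_{i,j}$. First, singletons: for $S=\{v_i\}$ the only vertex gives $0-\delta(v_i)$, so $k_S=-1$ for the two path-endpoints (contributing $2x^{n-1}$) and $k_S=-2$ for the $n-2$ internal vertices (contributing $(n-2)x^{n-2}$). Second, the whole path $S=V(P_n)$: both block-endpoints are path-endpoints so the minimum is $1$, giving the term $x^{n+1}$. Third, blocks $S_{i,j}$ with $j-i\ge 1$ and $S_{i,j}\ne V(P_n)$: at least one of $v_i,v_j$ is an internal vertex of $P_n$ (otherwise the block would be the whole path), so its difference is $0$; since all other differences are $\ge 0$, we conclude $k_S=0$.

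Counting this last class: out of $\binom{n+1}{2}$ connected subgraphs we subtract the $n$ singletons and the whole path, leaving
\[
\binom{n+1}{2}-n-1=\frac{n^2-n-2}{2}=\frac{(n-2)(n+1)}{2}
\]
subsets, which contribute $\tfrac{(n-2)(n+1)}{2}\,x^n$. Summing the four contributions yields \eqref{eq:Pn}. The only real pitfall is the degenerate case $n=2$, where $\delta_1=1$ and there are no internal vertices; the formula then reduces to $2x+x^3$, which matches the direct computation, so the statement is uniform in $n\ge 2$.
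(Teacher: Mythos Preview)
Your proof is correct and follows essentially the same approach as the paper's: both classify the connected induced subgraphs of $P_n$ as contiguous blocks, compute $k_S$ via the minimum of $\delta_S(v)-\delta_{\bar S}(v)$, and split into the three cases (singletons, proper sub-paths of length $\ge 2$, the whole path). The only cosmetic difference is that you count the middle case by subtracting $n+1$ from $\binom{n+1}{2}$, whereas the paper sums $\sum_{r=2}^{n-1}(n-r+1)$; these of course give the same value $\tfrac{(n-2)(n+1)}{2}$.
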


\begin{proof}
   We analyze the subsets with different cardinality separately.

   Let us consider any subset $S$ of $V(P_n)$ with connected induced subgraph $\langle S \rangle$, and $|S|=r$ with $r=1,\ldots, n$.

   If $r= 1$, then there are $n$ alliances.
                     \begin{itemize}
                       \item {Since there are two vertices with degree $1$, we have $2$ exact defensive ($-1$)-alliances. So, that makes appear the term
                           \[2 x^{n-1}.\]}
                       \item {Since there are $n-2$ vertices with degree $2$, we have $n-2$ exact defensive ($-2$)-alliances. So, that makes appear the term
                           \[(n-2)x^{n-2}.\]}
                     \end{itemize}

    Consider now the case $2\leq r\leq n-1$. The connectivity of $\langle S\rangle$ allows to compute $k_S$ since it is a sub-path with $r$ vertices.
    Then we have $n-r+1$ exact defensive $0$-alliances, since at least one endpoint of any induced $P_r$ attains the exact index of alliance $k_{P_r}=0$. So, we have the terms
         \[(n-r+1) x^n, \quad \text{ for every } 2\leq r\leq n-1.\]

    Finally, if $r= n$, then $S=V(P_n)$.
         We have just one exact defensive $1$-alliance, with the term
         \[x^{n+1}.\]

   Then, we obtain
   \[
   \begin{aligned}
   A(P_n;x) &= (n-2) \,x^{n-2} + 2 \,x^{n-1} + \displaystyle\sum_{r=2}^{n-1}(n-r+1) \, x^n + x^{n+1},\\
   &= (n-2) \,x^{n-2} + 2 \,x^{n-1} + \frac{(n-2)(n+1)}2 \, x^n + x^{n+1}.
   \end{aligned}
   \]
%
\end{proof}

We have the following consequences of Proposition \ref{p:Pn}.

\begin{corollary}\label{c:Pn}
   Let $P_n$ be the path graph with $n$ vertices. Then $A(P_n;x)$ is unimodal if and only if $2 \le n \le 4$.
\end{corollary}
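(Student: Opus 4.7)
By Proposition \ref{p:Pn}, for every $n\ge 3$ the polynomial $A(P_n;x)$ has exactly four nonzero coefficients which, listed in order of increasing degree, form the sequence
\[
(n-2,\ 2,\ \tfrac{(n-2)(n+1)}{2},\ 1).
\]
For $n=2$ the two $(n-2)$ factors vanish and $A(P_2;x)=2x+x^3$ has only two nonzero coefficients, namely $(2,1)$. Hence the whole proof reduces to checking unimodality of a sequence of length at most four, handled separately in the cases $n\in\{2,3,4\}$ and $n\ge 5$.

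For the ``$\Leftarrow$'' direction I would substitute $n=2,3,4$ into Proposition \ref{p:Pn} and read off the coefficient lists $(2,1)$, $(1,2,2,1)$ and $(2,2,5,1)$; each is unimodal by inspection, with respective modes at the first, at the second (or third), and at the third position. For the ``$\Rightarrow$'' direction, fix $n\ge 5$ and show that the sequence $(n-2,\,2,\,(n-2)(n+1)/2,\,1)$ has a strict local minimum at its second entry: from $n-2\ge 3 > 2$ the sequence strictly decreases between the first and the second coefficient, forcing any mode to sit at an index $\le 1$; while from $(n-2)(n+1)/2\ge 9 > 2$ it strictly increases between the second and the third coefficient, forcing any mode to sit at an index $\ge 3$. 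These two conclusions are incompatible, so no mode exists and the polynomial fails to be unimodal.

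The only subtle point is a matter of convention at $n=2$: the formula formally produces zero coefficients for $x^{n-2}$ and $x^n$, and the list $(0,2,0,1)$ fails the unimodality test in the strictest reading. I would interpret the corollary as applying to the list of nonzero coefficients, consistent with the other computations in the paper; in that reading $(2,1)$ is the relevant sequence and is trivially unimodal. Once this convention is fixed, the remaining argument consists of two elementary arithmetic inequalities and three direct inspections, and I do not anticipate any further obstacle.
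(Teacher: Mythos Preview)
Your proposal is correct and follows essentially the same approach as the paper: direct inspection for $n\in\{2,3,4\}$, and for $n\ge 5$ exhibiting the strict valley $A_{-2}(P_n)=n-2>2=A_{-1}(P_n)<(n-2)(n+1)/2=A_0(P_n)$. You are also right to flag the convention issue at $n=2$, where the coefficient list $(2,0,1)$ (or $(0,2,0,1)$) is not unimodal under the paper's stated definition; the paper simply asserts unimodality of $A(P_2;x)=2x+x^3$ without addressing this point.
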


\begin{proof}
By simple computation we can check that $A(P_n;x)$ is unimodal for $2 \le n \le 4$, since
$A(P_2;x)=2x+x^3$, $A(P_3;x)=x+2x^2+2x^3+x^4$ and $A(P_4;x)=2x^2+2x^3+5x^4+x^5$.
But, for $n > 4$ we have that $A_{-2}(P_n) = n-2 > 2 = A_{-1}(P_n) < (n-2)(n+1)/2 = A_{0}(P_n)$.
\end{proof}

Now we characterize graphs $\G$ with $A(\G; x) = A(P_t ; x)$.


\begin{theorem}\label{t:UniqPn}
  Let $t$ be a natural number with $t\ge2$.  If $A(\G; x) = A(P_t ; x)$, then $\G$ is an isomorphic graph to $P_t$.
\end{theorem}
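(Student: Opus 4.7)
The plan is to recover the order $n$ of $\G$ and its degree sequence from $A(\G;x)$, forcing $n = t$ and $\d_1 = 2$, and then to use the disjoint-union formula to rule out cycle components. First I would dispose of the case $t = 2$: since $A(P_2;x) = 2x + x^3$, Theorem \ref{t:properties} (i) and Proposition \ref{p:AlliPoly} (iv) together force $n = 2$ and both vertices of $\G$ to have degree $1$, hence $\G \simeq P_2$. From now on I assume $t \geq 3$, and write $n$ for the order of $\G$ and $\d_1, \d_n$ for its maximum and minimum degrees; then all four coefficients of $A(P_t;x) = (t-2)x^{t-2} + 2x^{t-1} + \tfrac{(t-2)(t+1)}{2}x^t + x^{t+1}$ are positive, and Theorem \ref{t:properties} (i)--(ii) directly give $n - \d_1 = t - 2$, that $\G$ has exactly $t - 2$ vertices of degree $\d_1 = n - t + 2$, and exactly $2$ vertices of degree $\d_1 - 1$.

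Next I would show $n = t$. Suppose for contradiction that $n > t$. By Theorem \ref{t:properties} (iv), $n + \d_n \leq \Deg(A(\G;x)) = t + 1$, so $\d_n \leq t + 1 - n$; combined with $n > t$ and $\d_n \geq 0$, this forces $n = t + 1$ and $\d_n = 0$. Hence $\G$ contains an isolated vertex, so $\G = \G' \cup E_1$ with $|\G'| = t$, and Corollary \ref{c:G+v} yields $A(\G';x) = (t-2)x^{t-3} + 2x^{t-2} + \tfrac{(t-2)(t+1)}{2}x^{t-1}$. This polynomial has degree $t - 1$, contradicting the inequality $\Deg(A(\G';x)) \geq |\G'| + \d_n(\G') \geq t$ from Theorem \ref{t:properties} (iv). Therefore $n = t$.

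Finally, with $n = t$ and $\d_1 = 2$, the graph $\G$ has $t - 2$ vertices of degree $2$ and $2$ of degree $1$, so it is a disjoint union of paths and cycles containing exactly one path component. If a cycle $C_b$ ($b \geq 3$) were another component, Theorem \ref{t:disjoints} would contribute a summand $x^{t - b} A(C_b;x)$ to $A(\G;x)$; because $V(C_b)$ is a connected exact defensive $2$-alliance in the $2$-regular graph $C_b$, the polynomial $A(C_b;x)$ carries the monomial $x^{b+2}$, producing a term $x^{t+2}$ in $A(\G;x)$ which cannot cancel, contradicting $\Deg(A(P_t;x)) = t + 1$. Therefore $\G$ is a single path on $t$ vertices, so $\G \simeq P_t$.

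The main obstacle will be the second step, pinning down $n$: the order is tied to $\d_1$ only through the lowest-degree monomial, so one must combine the upper bound on $\Deg(A(\G;x))$ (Theorem \ref{t:properties} (iv)) with the vertex-addition formula (Corollary \ref{c:G+v}) to eliminate the only residual case $n = t + 1$, after which the disjoint-union formula straightforwardly kills any hypothetical cyclic component.
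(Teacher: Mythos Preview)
Your proof is correct and follows the same overall strategy as the paper: read off the degree counts from the two lowest coefficients via Theorem~\ref{t:properties}~(i)--(ii), bound $n$ from above using $\Deg(A(\G;x))=t+1$ and Theorem~\ref{t:properties}~(iv), and then exclude extra components by looking at terms of degree $>t$.

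The only genuine tactical difference is in how you dispose of the residual case $n=t+1$, $\d_n=0$. The paper argues directly: any connected component $S$ containing a vertex of positive degree has $k_S=\min_{v\in S}\d_\G(v)\ge 1$, so it contributes a monomial of degree $n+k_S\ge t+2$, contradicting $\Deg(A(P_t;x))=t+1$. You instead strip off the isolated vertex with Corollary~\ref{c:G+v}, compute $A(\G';x)$ explicitly, and invoke Theorem~\ref{t:properties}~(iv) on $\G'$ to get $\Deg(A(\G';x))\ge t>t-1$. Both arguments are short; yours is pleasantly mechanical once the vertex-addition formula is available, while the paper's avoids that machinery and works directly from the definition of $k_S$. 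For the final step (excluding a cycle component) the two proofs are essentially identical.
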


\begin{proof}
   Let us consider a graph $\G$ with $A(\G;x)=A(P_t;x)$; denote by $n$ the order of $\G$ and by $\D_\G$ the maximum degree of $\G$.

   Assume first that $t\ge 3$.
   By items i) and ii) in Theorem \ref{t:properties},
   $n-\D_\G=t-2$,
   $\G$ has $t-2$ vertices of degree $\D_\G$, and $2$ vertices of degree $\D_\G-1$.
   So, we have $n\ge t$.

   Assume now that $t=2$.
   Then $A(\G;x)=A(P_2;x)=2x+x^3$.
   By Theorem \ref{t:properties} i),
   $n-\D_\G=1$ and
   $\G$ has $2$ vertices of maximum degree $\D_\G$.
   So, we have $n\ge t$.

   Hence, $n\ge t$ for every $t \ge 2$.

   By Theorem \ref{t:properties} iv), we have $t+1 \geq n + \d_{\G}$ where $\d_\G$ is the minimum degree of $\G$.
   So, $\d_{\G}$ is either $0$ or $1$. Hence, if $n > t$, then $n=t+1$ and $\d_{\G}=0$.
   Besides, the maximum degree of $A(\G;x)$ is greater than $t+1$ since $\G$ has a connected component with vertex of positive degree.
   This is a contradiction, thus $n=t$ and then $t-\D_\G=t-2$ if $t\ge 3$, and $2-\D_\G=1$ if $t=2$;
   therefore, $\D_\G=2$ if $t\ge 3$, and $\D_\G=1$ if $t=2$.

   Hence, if $t=2$, $\G$ is an isomorphic graph to $P_2$.
   If $t\ge 3$, then $\G$ has $t-2$ vertices of degree $2$ and $2$ vertices of degree $1$.
   If $\G$ is disconnected, then $A(\G;x)$ has at least two terms $x^k$ with $k>t$, one for each connected component.
   But this is a contradiction since $A(\G;x)=A(P_t;x)$.
   So, $\G$ is connected and this implies that $\G$ is an isomorphic graph to $P_t$.
\end{proof}

\begin{proposition}\label{p:Cn}
   Let $C_n$ be a cycle graph with order $n\geq 3$. Then
   \begin{equation}\label{eq:Cn}
      A(C_n;x) = n \, x^{n-2} + n(n-2) \, x^n + x^{n+2}.
   \end{equation}

\end{proposition}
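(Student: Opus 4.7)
The plan is to imitate the cardinality-by-cardinality argument used for Proposition \ref{p:Pn}. Since $C_n$ is $2$-regular, $\mathcal{K} = \{-2,-1,0,1,2\}$, and by Remark \ref{r:min_deg} the exact index of alliance of a subset $S$ equals $\min_{v\in S}(\d_S(v)-\d_{\overline S}(v))$. I would enumerate connected induced subgraphs $\langle S\rangle$ according to $r=|S|$, which stratifies naturally into three cases because removing any vertex from $C_n$ turns every connected remaining subgraph into a sub-path.

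\textbf{Case $r=1$.} Every vertex has degree $2$, so $\d_S(v)=0$ and $\d_{\overline S}(v)=2$, giving $k_S=-2$. The $n$ singletons contribute $n\,x^{n-2}$. (This already matches item~i) of Theorem~\ref{t:properties}, since $\d_1=2$ and $C_n$ has $n$ vertices of degree $2$.)

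\textbf{Case $2\le r\le n-1$.} Here $\langle S\rangle$ connected forces $S$ to be a sub-path of $C_n$; by cyclic symmetry there are exactly $n$ such sub-paths for each admissible $r$. For any such $S$ the two endpoints $v$ satisfy $\d_S(v)=1$ and $\d_{\overline S}(v)=1$, while interior vertices satisfy $\d_S(v)-\d_{\overline S}(v)=2$; so $k_S=0$ by Remark~\ref{r:min_deg}. Summing over $r=2,\ldots,n-1$ contributes $n(n-2)\,x^n$.

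\textbf{Case $r=n$.} The only set is $S=V(C_n)$, which is $2$-regular, so $k_S=2$ and we pick up $x^{n+2}$ (this also follows from Theorem~\ref{t:properties} v) together with the fact that $C_n$ is connected and $2$-regular). Adding the three contributions yields \eqref{eq:Cn}. There is no real obstacle; the only subtlety is recognizing that $1\le r\le n-1$ already forces $\langle S\rangle$ to be a path, which is what makes the middle case so uniform. As a sanity check, $A(C_n;1)=n+n(n-2)+1=n^2-n+1$, which agrees with the direct count of connected induced subgraphs of $C_n$ (e.g.\ $7$ for $C_3$ and $13$ for $C_4$).
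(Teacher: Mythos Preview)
Your argument is correct and follows essentially the same approach as the paper's proof: a case split on $r=|S|$ into $r=1$, $2\le r\le n-1$, and $r=n$, using that connected proper induced subgraphs of $C_n$ are sub-paths whose endpoints force $k_S=0$. Your version is slightly more detailed (explicit degree computations and the sanity check $A(C_n;1)=n^2-n+1$), but the structure and key observations are identical.
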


\begin{proof}
   We analyze the subsets with different cardinality separately.

   Let us consider any subset $S$ of $V(C_n)$ with connected induced subgraph $\langle S \rangle$, and $|S|=r$ with $r=1,\ldots, n$.

   If $r= 1$, then we have $n$ exact defensive ($-2$)-alliances. So, that makes appear the term
                           \[n x^{n-2}.\]
   Consider now the case $2\leq r\leq n-1$. The connectivity of $\langle S\rangle$ allows to compute $k_S$ since it is a path with $r$ vertices.
   Then we have $n$ exact defensive $0$-alliances, since the end vertices of the induced $P_r$ attain the exact index of alliance $k_{P_r}=0$.
   So, we have the term
         \[n x^n, \quad \text{ for every  } 2\le r \le n-1.\]

    Finally, if $r= n$, then $S=V(C_n)$.
    We have an exact defensive $2$-alliance with the term
         \[x^{n+2}.\]

   Then, we obtain $A(C_n;x) = n \,x^{n-2} + n(n-2) \, x^n + x^{n+2}$.
\end{proof}

\begin{corollary}\label{c:C_n}
   Let $C_n$ be a cycle graph with order $n \ge 3$. Then $A(C_n;x)$ is unimodal.
\end{corollary}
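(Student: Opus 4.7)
The plan is to simply read off the three nonzero coefficients from Proposition \ref{p:Cn} and verify that they form a unimodal sequence. By Proposition \ref{p:Cn}, the only nonzero coefficients of $A(C_n;x)$ are
\[
A_{-2}(C_n) = n, \qquad A_{0}(C_n) = n(n-2), \qquad A_{2}(C_n) = 1.
\]
Thus, consistent with the usage of ``unimodal'' in Corollary \ref{c:Pn} (where polynomials of $A(P_n;x)$ with gaps in their support are still declared unimodal), the claim reduces to showing that the sequence $(n,\; n(n-2),\; 1)$ is unimodal.

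The natural candidate for the mode is the middle entry $n(n-2)$, so I would verify the two inequalities
\[
n \leq n(n-2) \qquad \text{and} \qquad n(n-2) \geq 1.
\]
The first is equivalent to $n-2 \geq 1$, which holds for every $n\geq 3$; the second holds trivially since $n(n-2)\geq 3$ for $n\geq 3$. This establishes unimodality with mode at $k=0$.

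There is essentially no obstacle here: the real work has already been done in Proposition \ref{p:Cn}, where the explicit closed form was computed. The only subtle point is that, as with the path case treated in Corollary \ref{c:Pn}, we interpret unimodality relative to the sequence of coefficients supported on the indices in $\mathcal{K}$ that actually appear (i.e., the parity pattern is kept consistent with Proposition \ref{p:AlliPoly} v)), so that the intervening zero coefficients do not break the monotonicity requirement.
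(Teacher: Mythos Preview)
Your proof is correct and matches the paper's approach: the paper states Corollary~\ref{c:C_n} without proof, treating it as immediate from the explicit formula in Proposition~\ref{p:Cn}. You have correctly read off the three nonzero coefficients $(n,\,n(n-2),\,1)$ and verified the required inequalities, and your remark about the convention on intervening zero coefficients (justified by the paper's treatment of $A(P_2;x)=2x+x^3$ as unimodal in Corollary~\ref{c:Pn}) is exactly the right observation to make the argument airtight.
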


Here we want to characterize graphs $\G$ with $A(\G; x) = A(C_t ; x)$.


\begin{theorem}\label{t:UniqCn}
  Let $t$ be a natural number with $t\ge3$.  If $A(\G; x) = A(C_t ; x)$, then $\G$ is an isomorphic graph to $C_t$.
\end{theorem}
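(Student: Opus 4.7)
The strategy is to read off as much of the structure of $\G$ as possible from the three monomials of $A(C_t;x)=tx^{t-2}+t(t-2)x^t+x^{t+2}$ via Theorem \ref{t:properties} and Proposition \ref{p:AlliPoly}, and then to rule out every graph except $C_t$. Writing $n$ for the order of $\G$, $\D_\G$ for its maximum degree and $\d_\G$ for its minimum degree, item i) of Theorem \ref{t:properties} yields $n-\D_\G=t-2$ together with the fact that $\G$ has exactly $t$ vertices of degree $\D_\G$; item ii) says that $\G$ has no vertex of degree $\D_\G-1$; and item iv) combined with $\Deg(A(\G;x))=t+2$ gives $n+\d_\G\leq t+2$. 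Proposition \ref{p:AlliPoly} iv) applied to the three-term polynomial $A(\G;x)$ then forces the degree sequence of $\G$ to take at most two distinct values.

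In the regular case, the $t$ vertices of degree $\D_\G$ exhaust $V(\G)$, so $n=t$ and hence $\D_\G=2$. Writing $\G$ as a disjoint union $C_{n_1}\cup\cdots\cup C_{n_k}$ of cycles, Proposition \ref{p:Cn} together with Theorem \ref{t:disjoints} gives
\begin{equation*}
A(\G;x)=\sum_{i=1}^{k} x^{t-n_i}A(C_{n_i};x)=tx^{t-2}+\Bigl(\sum_{i=1}^{k} n_i^2-2t\Bigr)x^t+kx^{t+2},
\end{equation*}
and comparing the coefficient of $x^{t+2}$ with that of $A(C_t;x)$ forces $k=1$, i.e.\ $\G\simeq C_t$.

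In the non-regular case the polynomial has two distinct degree values $\D_\G$ and $c$, and Proposition \ref{p:AlliPoly} iv) says the three monomials of $A(\G;x)$ are precisely $x^{n-\D_\G},\,x^{n-c},\,x^{n+\d_\G}$; matching these against $x^{t-2},\,x^t,\,x^{t+2}$ and using $\d_\G=c$ (forced by $c<\D_\G-1$) gives $n=t+1$, $\D_\G=3$ and $\d_\G=c=1$. Thus $\G$ has $t$ vertices of degree $3$ and a unique vertex $v$ of degree $1$. Theorem \ref{t:properties} v) shows $A_3(\G)=0$, so $\G$ has no $3$-regular connected component; every component therefore contains the unique degree-$1$ vertex, whence $\G$ is connected.

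The main obstacle is to exclude this last case, which I do by exhibiting a second exact defensive $1$-alliance besides $V(\G)$. Let $u$ be the unique neighbor of $v$ and set $S=V(\G)\setminus\{v\}$; since $v$ is a leaf of a connected graph, $\langle S\rangle$ is connected. For any $w\in S\setminus\{u\}$ all three neighbors of $w$ in $\G$ lie in $S$, so $\d_S(w)-\d_{\bar{S}}(w)=3$, whereas $\d_S(u)-\d_{\bar{S}}(u)=2-1=1$. By Remark \ref{r:min_deg}, $k_S=1$, so $S$ is an exact defensive $1$-alliance. Together with the obvious exact defensive $1$-alliance $V(\G)$ (whose exact index equals $\d_\G=1$) this forces $A_1(\G)\geq 2$, contradicting the coefficient $1$ of $x^{t+2}$ in $A(C_t;x)$. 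Hence the non-regular case is impossible and $\G\simeq C_t$.
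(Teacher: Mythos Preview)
Your argument follows essentially the same route as the paper: extract the basic invariants from Theorem~\ref{t:properties}, reduce to either the $2$-regular case or the case $n=t+1$, $\D_\G=3$, $\d_\G=1$, and in the latter produce a second exact defensive $1$-alliance (namely $V(\G)\setminus\{v\}$) to contradict the coefficient $1$ of $x^{t+2}$. The key idea in the non-regular case is identical to the paper's.

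There is one small gap. In the non-regular case you claim that Proposition~\ref{p:AlliPoly}~iv) makes the three monomials of $A(\G;x)$ \emph{precisely} $x^{n-\D_\G}$, $x^{n-c}$, $x^{n+\d_\G}$ and then match exponents. This matching tacitly assumes these three exponents are distinct, which fails when $c=\d_\G=0$ (then $n-c=n+\d_\G$). The paper treats $\d_\G=0$ separately: it observes that then $x^n$ appears, forces $n\ge t+2$, and obtains a term of degree strictly larger than $t+2$ from a connected component with positive minimum degree. An easy patch in your framework: if $c=0$ then the $t$ vertices of degree $\D_\G$ span $\D_\G$-regular components, so $A_{\D_\G}(\G)\ge 1$ by Theorem~\ref{t:properties}~v); but $n+\D_\G=2n-t+2>t+2$, contradicting $\Deg(A(\G;x))=t+2$. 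With this one line added, your proof is complete.

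A stylistic remark on the regular case: your explicit computation via Theorem~\ref{t:disjoints} and Proposition~\ref{p:Cn} is correct, but the paper's one-line use of Theorem~\ref{t:properties}~v) (the coefficient of $x^{t+2}$ equals the number of $2$-regular components, hence $k=1$) is shorter and avoids the summation.
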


\begin{proof}
   Let us consider a graph $\G$ with order $n$ such that $A(\G;x)=A(C_t;x)$; denote by $\D_\G$ the maximum degree of $\G$ and by $\d_\G$ its minimum degree.
   By Theorem \ref{t:properties} i), $\G$ has $t$ vertices of degree $\D_\G$, so $n \geq t$. Besides, $n + \d_{\G} \leq t+2 \leq n + \D_\G$. Hence, $\d_\G \leq 2$.

    Assume that $n > t$. Then $\d_\G$ is either $0$ or $1$.

    If $\d_\G = 0$, then Proposition \ref{p:AlliPoly} iv) makes appear the term $x^{n}$. Since $x^{t+1}$ does not appear in $A(C_t;x)$, we obtain $n \geq t+2$.
    Furthermore, it appears one term, associated to one connected component with vertices of positive degree, with exponent $n + \D_\G > n$, but this is impossible since $A(C_t;x)$ has degree $t+2$.

    Hence $\d_\G = 1$ and $n = t + 1$. So, by Theorem \ref{t:properties} i), $\G$ has $t$ vertices of degree $\D_\G=3$ and one vertex of degree $1$.
     Denote by $v$ the vertex of $\G$ with degree $1$ and by $S$ the connected component of $\G$ containing $v$.
     Clearly, $S$ is an exact defensive $1$-alliance in $\G$, and then the term $x^{(t+1) + 1}$ appears in $A(\G;x)$; but $S\setminus\{v\}$ is an exact defensive $1$-alliance in $\G$.
     This is a contradiction since there is just one term $x^{t+2}$ in $A(\G;x)$.

    Hence, we have $n=t$. Besides, by Theorem \ref{t:properties} i), $\G$ is a regular graph and $\D_{\G} = 2$. Since $A(C_t;x)$ is a monic polynomial with degree $t+2$, the number of connected components of $\G$ is $1$ by Theorem \ref{t:properties} v), and so, $\G$ is connected.
\end{proof}

\subsection{Polynomials for complete graphs}
\label{ss:Polynomials for complete graphs}

Since $K_{n+1}$ is an isomorphic graph to $K_n\uplus \overline{K}_1$ for every $n\ge1$, Theorem \ref{t:join2} has the following consequences.

\begin{proposition}\label{p:Kn}
   Let $K_n$ be a complete graph with order $n\geq 1$. Then
   \begin{equation}\label{eq:Kn}
      A(K_n;x) = \frac{(x^2 +1)^n - 1}x.
   \end{equation}
\end{proposition}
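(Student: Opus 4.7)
The plan is a direct induction on $n$ using the recursion that Theorem \ref{t:join2} provides when $m=1$. First I would check the base case $n=1$: the only vertex of $K_1$ forms an exact defensive $0$-alliance, so $A(K_1;x)=x$, which matches $\big((x^2+1)^1-1\big)/x = x$.

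Next I would compute $\widetilde{A}_1(x)$ explicitly. From the formula
\[
\widetilde{A}_1(x) = \sum_{r=0}^{1} \binom{1}{r} x^{\min\{2r,2\}} = 1 + x^2.
\]
Combined with the isomorphism $K_{n+1}\simeq K_n \uplus \overline{K}_1$ highlighted just before the statement, Theorem \ref{t:join2} with $m=1$ yields the recurrence
\[
A(K_{n+1};x) = A(K_n;x)(1+x^2) + x, \qquad n\ge 1.
\]

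For the inductive step, assume $A(K_n;x) = \big((x^2+1)^n -1\big)/x$. Substituting into the recurrence gives
\[
A(K_{n+1};x) = \frac{(x^2+1)^n - 1}{x}\,(1+x^2) + x = \frac{(x^2+1)^{n+1} - (1+x^2) + x^2}{x} = \frac{(x^2+1)^{n+1} - 1}{x},
\]
closing the induction. I do not expect any real obstacle: the only thing to verify carefully is the evaluation $\widetilde{A}_1(x)=1+x^2$ and the arithmetic $-(1+x^2)+x^2 = -1$ in the last display, both of which are immediate.
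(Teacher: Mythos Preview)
Your proof is correct and follows precisely the approach the paper itself indicates: the paper does not spell out a proof but simply records that Proposition~\ref{p:Kn} is a consequence of Theorem~\ref{t:join2} via the isomorphism $K_{n+1}\simeq K_n\uplus\overline{K}_1$, which is exactly the recursion you set up and solve by induction.
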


%
%
%
%
%


\begin{corollary}\label{c:K_n}
   Let $K_n$ be the complete graph with order $n$. Then $A(K_n;x)$ is unimodal.
\end{corollary}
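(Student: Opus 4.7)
The plan is to exploit the compact formula $A(K_n;x)=\bigl((x^2+1)^n-1\bigr)/x$ furnished by Proposition \ref{p:Kn}, read off its coefficients with the binomial theorem, and then invoke the classical unimodality of a row of Pascal's triangle.

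First I would expand the numerator: $(x^2+1)^n-1=\sum_{k=1}^{n}\binom{n}{k}x^{2k}$, and divide by $x$ to obtain
$$
A(K_n;x)=\sum_{k=1}^{n}\binom{n}{k}\,x^{2k-1}.
$$
Thus the non-zero coefficients of $A(K_n;x)$, read in order of increasing exponent, are exactly the truncated row $\binom{n}{1},\binom{n}{2},\ldots,\binom{n}{n}$.

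Second, I would appeal to the well-known fact that the full row $\binom{n}{0},\binom{n}{1},\ldots,\binom{n}{n}$ is unimodal with mode at $k=\lfloor n/2\rfloor$. Dropping the leading entry $\binom{n}{0}=1$ preserves unimodality because the removed term lies (weakly) to the left of the mode for every $n\ge 1$ and satisfies $\binom{n}{0}\le\binom{n}{1}$; the surviving sequence $\binom{n}{1},\ldots,\binom{n}{n}$ is therefore still nondecreasing up to index $\lfloor n/2\rfloor$ and nonincreasing afterwards. This gives the unimodality of $A(K_n;x)$.

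There is essentially no real obstacle: the argument is a two-line computation once Proposition \ref{p:Kn} is in hand. The only point requiring a little care is a notational one, namely that the definition of unimodality of a polynomial must be read as applying to the sequence of its non-zero coefficients ordered by exponent (the same reading implicit in Corollary \ref{c:C_n}, where $A(C_n;x)$ has internal zero coefficients at odd exponents yet is still called unimodal); with that convention in place, nothing more is needed than the binomial expansion above.
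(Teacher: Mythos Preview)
Your proposal is correct and is precisely the argument the paper intends: the corollary is stated without proof, immediately after Proposition~\ref{p:Kn}, because the expansion $A(K_n;x)=\sum_{k=1}^{n}\binom{n}{k}x^{2k-1}$ reduces the claim to the unimodality of a row of Pascal's triangle. Your observation about the convention on zero coefficients is also apt and consistent with the paper's treatment of $A(C_n;x)$ in Corollary~\ref{c:C_n}.
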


Now we characterize graphs $\G$ with $A(\G; x) = A(K_t ; x)$.


\begin{theorem}\label{t:UniqKn}
  If $A(\G; x) = A(K_t ; x)$, then $\G$ is an isomorphic graph to $K_t$.
\end{theorem}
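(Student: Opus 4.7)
The plan is to read off strong constraints on $\G$ from Proposition \ref{p:Kn}, which gives
\[ A(K_t;x)=\sum_{k=1}^{t}\binom{t}{k}\,x^{2k-1}. \]
Since the minimum exponent of this polynomial is $1$ with coefficient $t$, Theorem \ref{t:properties}(i) forces $n-\D_\G=1$ (so $\D_\G=n-1$) and that $\G$ has exactly $t$ vertices of maximum degree; in particular $t\le n$. If $t=n$ then every vertex has degree $n-1$ and $\G\simeq K_n=K_t$, so the task is to rule out $t<n$. The degenerate case $t=1$ is immediate from Theorem \ref{t:properties}(iv), which forces $n+\d_\G\le\Deg(A(K_1;x))=1$ and hence $n=1$.

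For $t\ge 2$ my plan is to compare the coefficient of $x^3$. In $A(K_t;x)$ it equals $\binom{t}{2}$, so I would compute $A_{-\D_\G+2}(\G)$ directly. By Remark \ref{r:min_deg}, a connected set $S$ has $k_S=-\D_\G+2=3-n$ precisely when some $v\in S$ satisfies $\d(v)-2\d_S(v)=n-3$. Since $\d(v)\le n-1$, only $\d_S(v)\in\{0,1\}$ can occur. If $\d_S(v)=0$ then connectedness of $\langle S\rangle$ forces $S=\{v\}$ with $\d(v)=n-3$; let $a\ge 0$ denote the number of such vertices. If $\d_S(v)=1$ then $\d(v)=n-1$, so $v$ is a maximum-degree vertex; and because $\D_\G=n-1$ every max-degree vertex is adjacent to every other vertex of $\G$, which forces $S=\{v,w\}$ for some $w\ne v$. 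A short check shows that for any such pair one does have $k_S=3-n$ (the remaining inequality $2-\d(w)\ge 3-n$ is automatic). Counting unordered pairs $\{v,w\}$ with at least one max-degree endpoint gives $\binom{t}{2}+t(n-t)$, so in total $A_{-\D_\G+2}(\G)=a+\binom{t}{2}+t(n-t)$.

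Matching coefficients yields $a+t(n-t)=0$, and since both summands are non-negative and $t\ge 1$ this forces $n=t$, contradicting the assumption $n>t$. Therefore $n=t$ and $\G\simeq K_t$. The main obstacle is the exhaustive case analysis identifying which connected subsets realize the exact index $-\D_\G+2$; the key simplification is the observation that, once $\D_\G=n-1$ is established, every max-degree vertex is adjacent to every other vertex of $\G$, which both trivializes the connectedness condition in the case $\d_S(v)=1$ and caps $|S|$ at $2$.
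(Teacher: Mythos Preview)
Your argument is correct, but it takes a different route from the paper's proof. The paper also uses Theorem~\ref{t:properties}(i) to get $\D_\G=n-1$ and that $\G$ has $t$ vertices of maximum degree (hence $n\ge t$), but then observes that these $t$ vertices span a clique, so $\G$ contains an isomorphic copy of $K_t$ as a subgraph. It then invokes Proposition~\ref{l:subgraph} (proper subgraphs have different alliance polynomials, which ultimately follows from the evaluation $A(\cdot;1)$ counting connected induced subgraphs) to rule out $n>t$ in one stroke.

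Your proof instead bypasses Proposition~\ref{l:subgraph} entirely and compares the coefficient of $x^3$ directly, classifying all connected $S$ with $k_S=3-n$. This is more hands-on but perfectly valid, and the key observation that $\D_\G=n-1$ forces $|S|\le 2$ in the case $\d_S(v)=1$ is exactly what makes the count tractable. The paper's approach is shorter and more conceptual (one general lemma does the work), while yours is more self-contained and avoids appealing to the global count $A(\G;1)$. Either way the conclusion $n=t$ follows, and then $(n-1)$-regularity on $n$ vertices finishes the argument.
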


\begin{proof}
   Consider a graph $\G$ with order $n$ such that $A(\G;x)=A(K_t;x)$. By Theorem \ref{t:properties} i), $\G$ has $t$ vertices of maximum degree $\D_\G = n-1$, so $n \geq t$.
   Denote by $v_1,v_2,\ldots,v_t$ the vertices of $\G$ with maximum degree $n - 1$. Hence, we have that $\G$ contains a clique $\langle\{v_1,v_2,\ldots,v_t\}\rangle$ isomorphic to $K_t$.
   If $n > t$, then Proposition \ref{l:subgraph} gives $A(\G;x) \neq A(K_t;x)$. So, we obtain that $n = t$.
   Finally, since $n=t$, $\G$ is an ($t-1$)-regular graph. Therefore, $\G$ is an isomorphic graph to $K_t$.
\end{proof}

\medskip

Since a complete graph without one of its edges $K_n/e$ is isomorphic to $K_{n-2}\uplus \overline{K}_2$ for every $n\ge3$, Theorem \ref{t:join2} has the following consequence.

\begin{proposition}\label{t:Kn-e}
   Let $K_n/e$ be a complete graph without one of its edges, with $n \geq 2$ vertices. Then,
%
%
   \begin{equation}\label{eq:kn-e}
      A(K_n/e;x) = \frac{(x^2 + 1)^n - (x^4 - x^3)(x^2 + 1)^{n-2} + x^3 - 2x^2 -1}x.
   \end{equation}
\end{proposition}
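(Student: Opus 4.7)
The plan is to use the isomorphism $K_n/e \simeq K_{n-2}\uplus \overline{K}_2$ (which is valid for $n\ge 3$), invoke Theorem \ref{t:join2}, and then perform a short algebraic simplification. The $n=2$ boundary case will be checked by hand.

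First I would specialize Theorem \ref{t:join2} with the first factor $K_{n-2}$ (so replace the parameter $n$ of the theorem by $n-2$) and with $m=2$, obtaining
\[
A(K_n/e; x) = A(K_{n-2}; x)\,\widetilde{A}_2(x) + 2x^2.
\]
Next I would compute $\widetilde{A}_2(x)$ directly from the definition,
\[
\widetilde{A}_2(x) = \sum_{r=0}^{2}\binom{2}{r} x^{\min\{2r,\,3\}} = 1 + 2x^2 + x^3,
\]
and substitute $A(K_{n-2};x) = \bigl((x^2+1)^{n-2}-1\bigr)/x$ from Proposition \ref{p:Kn}. This yields
\[
A(K_n/e;x) = \frac{(x^2+1)^{n-2}(1+2x^2+x^3) - (1+2x^2+x^3) + 2x^3}{x} = \frac{(x^2+1)^{n-2}(x^3+2x^2+1) + x^3 - 2x^2 - 1}{x}.
\]

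The only real task is to recognize the factor $(x^2+1)^{n-2}(x^3+2x^2+1)$ as the desired expression $(x^2+1)^n - (x^4-x^3)(x^2+1)^{n-2}$. For this I would pull out $(x^2+1)^{n-2}$ and use $(x^2+1)^2 = x^4+2x^2+1$ to write
\[
(x^2+1)^n - (x^4-x^3)(x^2+1)^{n-2} = (x^2+1)^{n-2}\bigl[(x^4+2x^2+1) - (x^4 - x^3)\bigr] = (x^2+1)^{n-2}(x^3+2x^2+1),
\]
which matches. Substituting back gives exactly the formula \eqref{eq:kn-e}.

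Finally, since the isomorphism $K_n/e \simeq K_{n-2}\uplus \overline{K}_2$ needs $n\ge 3$, I would check the case $n=2$ directly: here $K_2/e = \overline{K}_2 = E_2$, and Corollary \ref{c:vacio} gives $A(E_2;x) = 2x^2$, which agrees with evaluating the right-hand side of \eqref{eq:kn-e} at $n=2$ (the two $(x^2+1)^n$ and $(x^4-x^3)(x^2+1)^{n-2}$ terms cancel down to $x^3$, leaving $(2x^3)/x = 2x^2$). I do not expect any serious obstacle: Theorem \ref{t:join2} does all the combinatorial work, so only the identity verifying the numerator remains, and that is a one-line factorization.
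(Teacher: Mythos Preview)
Your proposal is correct and follows exactly the route the paper intends: the paper presents this proposition simply as a consequence of Theorem~\ref{t:join2} via the isomorphism $K_n/e \simeq K_{n-2}\uplus \overline{K}_2$, and you have filled in the omitted algebra (the computation of $\widetilde{A}_2(x)$, the substitution of $A(K_{n-2};x)$, and the factorization identity) as well as the $n=2$ boundary check.
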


Proposition \ref{t:Kn-e} gives the following results.

\begin{corollary}\label{c:K_n-e}
   Let $K_n/e$ be the complete graph with $n \geq 2$ vertices, without one of its edges. Then $A(K_n/e;x)$ is unimodal if and only if $2 \le n \le 4$.
\end{corollary}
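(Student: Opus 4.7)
The plan is to extract enough coefficients of $A(K_n/e;x)$ from the formula \eqref{eq:kn-e} to settle the two directions of the equivalence separately: direct verification for $n\in\{2,3,4\}$, and a ``down--up'' obstruction at the bottom of the coefficient sequence for $n\ge 5$.

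Since $K_n/e$ has exactly $n-2$ vertices of degree $n-1$ and $2$ vertices of degree $n-2$, items i) and ii) of Theorem \ref{t:properties} give at once $A_{-(n-1)}(K_n/e)=n-2$ and $A_{-(n-2)}(K_n/e)=2$, i.e.\ the two lowest-degree non-zero coefficients of $A(K_n/e;x)$ equal $n-2$ and $2$. To read off the next coefficient, I would take $[x^4]$ in the numerator of \eqref{eq:kn-e}: the summand $(x^2+1)^n$ contributes $\binom{n}{2}$, the summand $-(x^4-x^3)(x^2+1)^{n-2}$ contributes $-1$ (its $x^3$ part only produces odd powers of $x$), and the tail $x^3-2x^2-1$ contributes $0$. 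After dividing by $x$ this gives $A_{-(n-3)}(K_n/e)=\binom{n}{2}-1$.

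For the small cases $n\in\{2,3,4\}$ I would simply plug into \eqref{eq:kn-e} and obtain
\[
A(K_2/e;x)=2x^2,\quad A(K_3/e;x)=x+2x^2+2x^3+x^4,\quad A(K_4/e;x)=2x+2x^2+5x^3+2x^4+2x^5+x^6,
\]
whose non-zero coefficient sequences $(2)$, $(1,2,2,1)$ and $(2,2,5,2,2,1)$ are each plainly unimodal (with modes $2$, $2$ and $5$, respectively).

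For $n\ge 5$, the first three non-zero coefficients $n-2,\ 2,\ \binom{n}{2}-1$ satisfy $n-2\ge 3>2$ and $\binom{n}{2}-1\ge 9>2$, so the coefficient sequence strictly decreases from the first to the second entry and then strictly increases from the second to the third, which is incompatible with the ``non-decreasing then non-increasing'' shape required by the definition of unimodality. This shows that $A(K_n/e;x)$ is not unimodal for any $n\ge 5$ and completes the equivalence. The only non-trivial computational step is the extraction of the third coefficient from \eqref{eq:kn-e}, but this is straightforward: only the $-x^4(x^2+1)^{n-2}$ piece contributes to the power $x^4$ beyond $(x^2+1)^n$ itself, because all other summands live in odd degrees or in degree at most $3$.
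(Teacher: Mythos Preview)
Your proof is correct and follows essentially the same route as the paper's own argument: both verify the three small cases by explicit computation of $A(K_n/e;x)$ and, for $n\ge 5$, exhibit the same ``down--up'' obstruction at the first three coefficients $A_{-(n-1)}=n-2$, $A_{-(n-2)}=2$, $A_{-(n-3)}=\binom{n}{2}-1$. The only difference is that you spell out where each of these values comes from (Theorem~\ref{t:properties}~i)--ii) for the first two, extraction of $[x^4]$ from the numerator of \eqref{eq:kn-e} for the third), whereas the paper simply states them.
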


\begin{proof}
   We can check that $A(K_n/e;x)$ is unimodal for $2 \le n \le 4$, since
$A(K_2/e;x)=A(E_2;x)=2x^2$, $A(K_3/e;x)=A(P_3;x)=x+2x^2+2x^3+x^4$ and $A(K_4/e;x)=2x+2x^2+5x^3+2x^4+2x^5+x^6$.
But, for $n > 4$ we have that $A_{-(n-1)}(K_n/e) = n-2 > 2 = A_{-(n-2)}(K_n/e) < {n \choose 2} - 1 = A_{-n+3}(K_n/e)$.
\end{proof}

Now we characterize graphs $\G$ with $A(\G; x) = A(K_t/e ; x)$.


\begin{theorem}\label{c:Kn-e}
   Let $t$ be a natural number with $t\ge2$.  If $A(\G; x) = A(K_t/e ; x)$, then $\G$ is an isomorphic graph to $K_t/e$.
\end{theorem}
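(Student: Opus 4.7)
I would treat the small cases $t=2,3$ by direct reduction to earlier uniqueness results, and then handle $t\ge 4$ via a two-sided bound on the order $n$ of $\G$ that forces $n=t$.

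For $t=2$ one has $K_2/e=E_2$ with $A(E_2;x)=2x^2$, so Corollary~\ref{c:vacio} immediately gives $\G\simeq E_2=K_2/e$. For $t=3$ the graph $K_3/e$ is isomorphic to $P_3$, so Theorem~\ref{t:UniqPn} applied to $A(\G;x)=A(P_3;x)$ yields $\G\simeq P_3=K_3/e$. From here on I assume $t\ge 4$.

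Write $n$, $\D$, $\d$ for the order, maximum degree and minimum degree of $\G$. The two lowest-degree terms of $A(K_t/e;x)$ read off from \eqref{eq:kn-e} are $(t-2)x$ and $2x^2$, so items i) and ii) of Theorem~\ref{t:properties} force $\D=n-1$ and that $\G$ has exactly $t-2$ vertices $v_1,\dots,v_{t-2}$ of degree $n-1$ and exactly $2$ vertices $u_1,u_2$ of degree $n-2$; in particular $n\ge t$. Each $v_i$ is adjacent to every other vertex, so every vertex outside $\{v_1,\dots,v_{t-2}\}$ has degree at least $t-2$, giving $\d\ge t-2$. In the opposite direction, $V(K_t/e)$ is an exact defensive $(t-2)$-alliance while no set achieves exact index $t-1$ (that would require a $(t-1)$-regular component, by Theorem~\ref{t:properties} v)), so $\Deg(A(K_t/e;x))=2t-2$ and Theorem~\ref{t:properties} iv) gives $\d\le 2t-2-n$. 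Combining the two estimates yields $n\le t$, hence $n=t$.

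With $n=t$ the vertex set of $\G$ is $\{v_1,\dots,v_{t-2},u_1,u_2\}$, every $v_i$ is universally adjacent, and $u_1$ (of degree $t-2$) has exactly one non-neighbor; since no $v_i$ can be this non-neighbor we must have $u_1\not\sim u_2$, so $\G$ is $K_t$ minus the edge $u_1u_2$, i.e.\ $\G\simeq K_t/e$. The main obstacle is securing the lower bound $\d\ge t-2$ from the structural consequence of having $t-2$ universally adjacent vertices; once this is paired with the polynomial's top degree, $n$ is pinned down and the remaining adjacency structure is immediate.
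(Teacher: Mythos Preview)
Your argument is correct and takes a genuinely different path from the paper's proof. Both proofs open identically: the cases $t\le 3$ reduce to earlier uniqueness results, and for larger $t$ items i) and ii) of Theorem~\ref{t:properties} read off from the bottom of $A(K_t/e;x)$ that $\G$ has $t-2$ vertices of degree $n-1$ and two of degree $n-2$, whence $n\ge t$. The divergence is in how one forces $n\le t$ and then pins down the isomorphism type. The paper observes that the $t$ identified vertices already span a copy of $K_t/e$ (or $K_t$) inside $\G$, so if $n>t$ this is a \emph{proper} subgraph and Proposition~\ref{l:subgraph} gives $A(\G;1)>A(K_t/e;1)$; once $n=t$, the exact value $A(\G;1)=2^t-2$ is invoked to separate $K_t/e$ from every other $t$-vertex graph. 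You instead extract a structural lower bound $\d\ge t-2$ (any vertex is adjacent to all $t-2$ universal vertices) and pair it with the top-degree information $\Deg(A(K_t/e;x))=2t-2$ via Theorem~\ref{t:properties} iv) and v), which squeezes $n\le t$ without appealing to Proposition~\ref{l:subgraph} or the evaluation at $1$; the final identification $u_1\not\sim u_2$ then drops out by counting neighbors. Your route is slightly more self-contained and makes the role of $\Deg(A)$ explicit, while the paper's route is more uniform with its proof for $K_t$ and illustrates the usefulness of the invariant $A(\G;1)$.
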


\begin{proof}
   If $t=2$, then the result follows from Corollary \ref{c:vacio}.
   Assume now that $t\ge 3$.

   Let us consider a graph $\G$ with order $n$ such that $A(\G;x)=A(K_t/e;x)$.
   By items i) and ii) in Theorem \ref{t:properties}, $\G$ has $t-2$ vertices of maximum degree $\D_\G = n-1$ and $2$ vertices of degree $n - 2$, so $n \geq t$.
   Denote by $v_1,\ldots,v_{t-2}$ the vertices of $\G$ with maximum degree $n - 1$ and by $w_1,w_2$ the vertices with degree $n - 2$.
   Hence, we have that $\G$ contains a subgraph $\langle\{v_1,\ldots,v_{t-2},w_1,w_2\}\rangle$ which is either a clique or an isomorphic graph to $K_t/e$,
   depending on whether or not $w_1$ is adjacent to $w_2$ in $\G$.
   If $n > t$, then Proposition \ref{l:subgraph} gives $A(\G;x) \neq A(K_t;x)$. So, we obtain that $n = t$.

   Note that any nonempty subset $S$ of $V(\G)$ induces a connected subgraph $\langle S\rangle$ of $\G$, if $S\neq\{w_1,w_2\}$. Obviously, $A(\G;1)= 2^t-2$ and this is a characterization of the graph $K_t/e$, since a graph with one more induced connected subgraph is isomorphic to $K_t$.
   Furthermore, any graph $\G$ with order $t$ obtained from $K_t$ by removing at least two edges, does not satisfy the condition $A(\G;1)= 2^t-2$.
   Since $A(\G;x)=A(K_t/e;x)$ and $\G$ has order $t$, then $\G$ is isomorphic to $K_t/e$.
\end{proof}

\subsection{Polynomials for completed bipartite graphs}

Since $\overline{K}_n\uplus \overline{K}_m = K_{n,m}$,
an argument similar to the ones in the proofs of Theorems \ref{t:join} and \ref{t:join2} allows to obtain $A(\overline{K}_n\uplus \overline{K}_m;x)$.

\begin{proposition}\label{p:Knm}
   Let $K_{n,m}$ be a complete bipartite graph with $n,m \ge 1$. Then
   \begin{equation}\label{eq:Knm}
      A(K_{n,m};x) = n\, x^n + m\, x^m + \displaystyle\sum_{k=2}^{n+m} \quad \sum_{i,j>0\,,\, i+j = k} {n \choose i}{m \choose j} x^{n + m + \min\{2i - n , 2j - m\}}.
   \end{equation}
\end{proposition}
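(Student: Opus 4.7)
The plan is to compute $A(K_{n,m};x)$ directly from definition (\ref{eq:Poly2}) by classifying the nonempty subsets $S\subseteq V(K_{n,m})$ according to how $S$ meets the two parts. Let $A,B$ denote the parts with $|A|=n$ and $|B|=m$. Since $K_{n,m}$ has no edges inside $A$ or inside $B$, an induced subgraph $\langle S\rangle$ is connected exactly when either $|S|=1$ or $S$ meets both $A$ and $B$; this is the key combinatorial observation and is essentially the only thing that needs to be checked carefully.

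First I would handle the singletons. For $v\in A$ one has $\delta(v)=m$, and setting $S=\{v\}$ gives $\delta_S(v)=0$, $\delta_{\bar S}(v)=m$, so by Remark \ref{r:min_deg} the exact index is $k_S=-m$ and the contribution is $x^{(n+m)-m}=x^n$. Summing over the $n$ choices of $v$ yields the term $n\,x^n$; the symmetric argument in $B$ gives $m\,x^m$.

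Next, for $S=S_A\cup S_B$ with $i:=|S_A|\ge 1$ and $j:=|S_B|\ge 1$, I would compute the quantity $\delta_S(v)-\delta_{\bar S}(v)$ on each side: for $v\in S_A$ it equals $j-(m-j)=2j-m$, and for $v\in S_B$ it equals $2i-n$. These values are constant on each side, so Remark \ref{r:min_deg} immediately gives
$$k_S=\min\{2j-m,\,2i-n\}.$$
There are $\binom{n}{i}\binom{m}{j}$ such subsets for fixed $(i,j)$, each contributing $x^{n+m+\min\{2i-n,\,2j-m\}}$.

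Finally, summing over all $i,j\ge 1$ (equivalently, grouping by $k=i+j$ from $2$ to $n+m$) and adding the two singleton contributions produces the stated formula. I do not foresee a real obstacle: the only delicate point is the connectivity criterion, after which the argument is purely arithmetic and parallels the $K_n\uplus\overline{K}_m$ computation in the proof of Theorem \ref{t:join2}.
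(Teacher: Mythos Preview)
Your proposal is correct and follows essentially the same approach as the paper: both arguments classify the connected subsets $S\subseteq V(K_{n,m})$ into singletons (yielding $n\,x^n+m\,x^m$) and sets meeting both parts, then compute the exact index on a mixed set via $\min\{2j-m,\,2i-n\}$ using Remark~\ref{r:min_deg}. The connectivity criterion you flag as the only delicate point is exactly the one the paper invokes, and the remainder of the computation matches.
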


\begin{proof}

   Fix $n \geq 1$ and $m\geq 1$. Let us consider any subset $S$ of $V(K_{n,m})$ with connected induced subgraph $\langle S \rangle$ and $|S|=k$ with $k=1,\ldots, n+m$.

    If $k= 1$, then there are $n+m$ alliances.
                     \begin{itemize}
                       \item {If $S$ is a vertex associated to $n$, we have $n$ exact defensive ($-m$)-alliances. So, that makes appear the term
                           \[n x^{n+m-m}.\]}
                       \item {If $S$ is a vertex associated to $m$, we have $m$ exact defensive ($-n$)-alliances. So, that makes appear the term
                           \[m x^{n+m-n}.\]
                           }
                     \end{itemize}

    Consider now the case $2\leq k\leq n+m$.
    Obviously, any subset $S$ of $V(G)$ with $k\geq 2$ elements induces a connected subgraph of $K_{n,m}$, if and only if it contains elements in both parts.
    Then, we have ${n \choose i}{m \choose j}$ exact defensive $\min\{j - (m-j),i - (n-i)\}$-alliances for each couple $i,j >0$ such that $i+j=k$
    (by choosing $i$ vertices associated to $n$ and $j$ vertices associated to $m$).

    So, we have the terms
    \[\displaystyle\sum_{i,j >0,\,i+j=k}{n \choose i}{m \choose j} \, x^{n + m + \min\{2j-m,2i-n\}}.\]

   Then, we obtain
      \[
      A(K_{n,m};x) = n\, x^n + m\, x^m + \displaystyle\sum_{k=2}^{n+m} \quad \sum_{i,j>0\,,\, i+j = k} {n \choose i}{m \choose j} x^{n + m + \min\{2i - n , 2j - m\}}.
      \]
\end{proof}

\medskip

The complete bipartite graph $K_{n-1,1}$ is called an $n$ \emph{star graph} $S_n$. We have the following consequence of Theorem \ref{t:join2} (since $S_n$ is an isomorphic graph to $K_1\uplus \overline{K}_{n-1}$ for every $n\ge2$) or Proposition \ref{p:Knm}.

\begin{corollary}\label{c:Sn}
   Let $S_n$ be star graph with order $n\geq 2$. Then
   \begin{equation}\label{eq:Sn}
      A(S_n;x) = A(K_{n-1,1};x) = \displaystyle\sum_{k=0}^{\lfloor (n-1)/2\rfloor} {n-1 \choose k} x^{2k+1} + (n-1) x^{n-1} + x^{n + 1} \sum_{k=\lceil n/2\rceil}^{n-1} {n-1 \choose k}.
   \end{equation}
\end{corollary}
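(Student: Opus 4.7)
The plan is to exploit the isomorphism $S_n \simeq K_1 \uplus \overline{K}_{n-1}$, valid for every $n \geq 2$, and to apply Theorem \ref{t:join2} with parameters $(n,m) \mapsto (1, n-1)$. Since $A(K_1; x) = x$, formula \eqref{eq:join2} specializes to
\[
A(S_n; x) = x \cdot \widetilde{A}_{n-1}(x) + (n-1)\, x^{n-1},
\]
where $\widetilde{A}_{n-1}(x) = \sum_{r=0}^{n-1} {n-1 \choose r} x^{\min\{2r,\, n\}}$. The task thus reduces to rewriting $x \cdot \widetilde{A}_{n-1}(x) = \sum_{r=0}^{n-1} {n-1 \choose r} x^{\min\{2r,\, n\}+1}$ in the piecewise form appearing in \eqref{eq:Sn}.

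The key step is to split the sum over $r$ according to the value of $\min\{2r,n\}$: those indices with $2r \leq n$ contribute an exponent $2r+1$, while those with $2r \geq n$ contribute an exponent $n+1$ (the boundary case $2r = n$, which only arises when $n$ is even, gives the exponent $n+1$ under either reading, so there is no ambiguity). Using the identifications $\{\,r : 2r < n\,\} = \{0,1,\ldots, \lfloor (n-1)/2 \rfloor\}$ and $\{\,r : 2r \geq n\,\} = \{\lceil n/2 \rceil,\ldots, n-1\}$, the expansion becomes
\[
x \cdot \widetilde{A}_{n-1}(x) = \sum_{r=0}^{\lfloor (n-1)/2 \rfloor} {n-1 \choose r} x^{2r+1} + x^{n+1} \sum_{r=\lceil n/2 \rceil}^{n-1} {n-1 \choose r},
\]
and adding the remaining $(n-1)\, x^{n-1}$ term yields the stated formula.

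The main obstacle is the parity case analysis concealed behind the floor and ceiling notation: when $n$ is even one has $\lfloor (n-1)/2 \rfloor = n/2 - 1$ and $\lceil n/2 \rceil = n/2$, while for $n$ odd $\lfloor (n-1)/2 \rfloor = (n-1)/2$ and $\lceil n/2 \rceil = (n+1)/2$; in both situations these are consecutive integers, so the two ranges genuinely partition $\{0,1,\ldots,n-1\}$ and no index is missed or double-counted. As an alternative route, one could bypass Theorem \ref{t:join2} entirely and argue combinatorially: any connected induced subgraph of $S_n$ of size at least two must contain the central vertex $c$, hence is of the form $S = \{c\} \cup L$ with $\emptyset \neq L \subseteq V(S_n) \setminus \{c\}$, and Remark \ref{r:min_deg} immediately gives $k_S = \min\{2|L| - (n-1),\, 1\}$; summing ${n-1 \choose |L|}\, x^{n + k_S}$ over all such $L$, together with the single-vertex contributions $x$ from the center and $(n-1) x^{n-1}$ from the $n-1$ leaves, reproduces exactly the same identity.
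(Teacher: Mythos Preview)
Your proof is correct and follows exactly the route the paper indicates: it derives \eqref{eq:Sn} by specializing Theorem \ref{t:join2} to $K_1 \uplus \overline{K}_{n-1}$ and then splitting the sum defining $\widetilde{A}_{n-1}$ according to whether $2r < n$ or $2r \ge n$. Your alternative direct enumeration via Remark \ref{r:min_deg} is likewise essentially the specialization of Proposition \ref{p:Knm} that the paper cites as the other possible derivation.
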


Here we want to characterize graphs $\G$ with $A(\G; x) = A(S_t ; x)$.


\begin{theorem}\label{t:UniqSn}
  Let $t$ be a natural number with $t\ge2$.  If $A(\G; x) = A(S_t ; x)$, then $\G$ is an isomorphic graph to $S_t$.
\end{theorem}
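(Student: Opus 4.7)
The plan is to dispatch the small cases $t=2$ and $t=3$ by invoking Theorem \ref{t:UniqPn}, since $S_2\simeq P_2$ and $S_3\simeq P_3$. For $t\ge 4$, I would begin from the explicit formula of Corollary \ref{c:Sn}, which shows that $A(S_t;x)$ has minimum-degree term $x$ with coefficient $1$. Applying Theorem \ref{t:properties}(i) to any graph $\G$ with $A(\G;x)=A(S_t;x)$, of order $n$, then forces $\D_\G=n-1$ and a unique vertex $c\in V(\G)$ of maximum degree $n-1$. Since $c$ is adjacent to every other vertex, $\G$ contains $S_n$ as a spanning subgraph.

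The main reduction I would carry out is to evaluate $A(\G;1)$ using this structure. Because $c$ is adjacent to all other vertices, any nonempty $S\ni c$ induces a connected subgraph, contributing $2^{n-1}$ sets; a set $S\subseteq V(\G)\setminus\{c\}$ is connected in $\G$ if and only if it is connected in $H:=\langle V(\G)\setminus\{c\}\rangle$, so by Theorem \ref{th:Eval1}(1),
$$A(\G;1)=2^{n-1}+\mathrm{conn}(H),$$
where $\mathrm{conn}(H)$ counts the nonempty connected induced subgraphs of $H$. The same reasoning applied to $S_t$ itself yields $A(S_t;1)=2^{t-1}+(t-1)$. The crux is then to sandwich $n$ via the trivial bounds $n-1\le\mathrm{conn}(H)\le 2^{n-1}-1$: the lower bound gives $2^{n-1}+n\le 2^{t-1}+t$, so $n\le t$ by strict monotonicity of $x\mapsto 2^{x-1}+x$; the upper bound gives $2^{n}\ge 2^{t-1}+t$, which fails for $n\le t-1$ since then $2^{n}\le 2^{t-1}<2^{t-1}+t$. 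Hence $n=t$.

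With $n=t$, the spanning subgraph $S_n=S_t$ sits inside $\G$; if this containment were proper, Proposition \ref{l:subgraph} would give $A(\G;x)\ne A(S_t;x)$, a contradiction. Equivalently, the equality $A(\G;1)=A(S_t;1)$ becomes $\mathrm{conn}(H)=t-1$, which forces $H$ to have no edges and therefore $\G\simeq S_t$. The only nontrivial obstacle I anticipate is the lower bound $n\ge t$, and it is settled by the elementary estimate $\mathrm{conn}(H)\le 2^{|V(H)|}-1$; no finer information on the coefficients of $A(S_t;x)$ is needed beyond the single coefficient extracted in the first step.
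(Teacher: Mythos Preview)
Your argument is correct and follows essentially the same architecture as the paper's proof: use Theorem~\ref{t:properties}(i) on the coefficient of $x$ to produce a universal vertex, pin down $n=t$ via the count $A(\G;1)$, and then rule out any extra edge among the non-central vertices. The one substantive difference is in the upper bound $n\le t$: the paper obtains it from the degree of the polynomial, using $\Deg\big(A(S_t;x)\big)=t+1\ge n+\d_\G\ge n+1$ (Theorem~\ref{t:properties}(iv) together with $\d_\G\ge 1$), whereas you deduce it purely enumeratively from $2^{n-1}+(n-1)\le A(\G;1)=2^{t-1}+(t-1)$; both routes are equally short, and your version has the mild advantage of using only the single invariant $A(\G;1)$ for both inequalities.
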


\begin{proof}
   If $t=2$ then Theorem \ref{t:UniqPn} gives the result. Fix $t\ge 3$.

   Let us consider a graph $\G$ with order $n$ such that $A(\G;x) = A(S_t;x)$. Since $\Deg_{min}(A(\G;x)) = 1$, there is $v\in V(\G)$ such that $v \sim w$ for all $w\in V(\G) \setminus\{v\}$. Therefore, $\G$ is a connected graph, $\d_\G$ (the minimum degree of $\G$) is greater that $0$ and $\G$ contains an isomorphic subgraph $\G_S$ of $S_{n}$.
   Hence, any $S \subseteq V(\G)$ which induces a connected subgraph $\langle S\rangle$ in $\G_S$, induces a connected subgraph in $\G$, too. So,
   \begin{equation}\label{eq:UniqSn1}
      A(\G;1) \geq A(\G_S;1) = A(S_{n};1).
   \end{equation}

   Since $\Deg(A(\G;x)) = t + 1$, we have $n + \d_\G \leq t+1$, and so, $n \leq t$.
   But, by \eqref{eq:Sn}, we have
   $$2^{n} > A(\G;1) = t-1 + \sum_{k=0}^{t-1} {t-1 \choose k} = 2^{t-1} + t-1 > 2^{t-1},$$
   and this condition implies that $n \geq t$.
   Thus, $n = t$.

   Seeking for a contradiction assume that there are $w_1,w_2 \in V(\G)\setminus\{v\}$ such that $w_1 \sim w_2$.
   Then, $\{w_1 , w_2\}$ induces a connected subgraph in $\G$, but not in $\G_S$; and so,
   \[A(\G;1) > A(S_t;1) \quad \Longrightarrow \quad A(\G;x) \neq A(S_t;x).\]
   This is the contradiction we were looking for, and so, $\G$ is isomorphic to $S_t$.
\end{proof}

\section{Distinctive power of alliance polynomial}
In this section we explain the distinctive power of the alliance polynomial of a graph. This is an interesting difference with others well-known polynomials of graphs.

We denote by $D(\G;x)$ the domination polynomial of $\G$ (see \cite{AAP}), by $I(\G;x)$ the independence polynomial of $\G$ (see \cite{HL}), by $m(\G;x)$ the matching polynomial (see \cite{F}), by $p(\G;x)$ the characteristic polynomial, by $T(\G;x,y)$ the Tutte polynomial (see \cite{T}), by $P(\G;x,y)$ the bivariate chromatic polynomial introduced in \cite{DPT}, and by $Q(\G;x,y)$ the subgraph component polynomial introduced in \cite{TAM}.

We say that a graph $\G$ is characterized by a graph polynomial $f$ if for every graph $\G'$ such that $f(\G') = f(\G)$ we have that $\G'$ is isomorphic to $\G$. The class of graphs $K$ is characterized by a graph polynomial $f$ if every graph $\G \in K$ is characterized by $f$.

This notion has been studied in \cite{MN,N}, for the chromatic polynomial, the Tutte polynomial and the matching polynomial. It
is shown, e.g., that several well-known families of graphs are determined by their Tutte polynomial, among them the class of wheels, squares of cycles, complete multipartite graphs, ladders, Möbius ladders, and hypercubes.
In Section 3, we have proved that path, cycle, complete and star graphs are characterized by their alliance polynomials.
In \cite{RST} the authors prove that the family of alliance polynomials of cubic graphs is a special one, since it does not contain alliance polynomials of graphs which are not cubic; and they also prove that the cubic graphs with at most $10$ vertices are characterized by their alliance polynomials.
Furthermore, in \cite{CRST2} the authors prove a similar result for the family of alliance polynomials of $\D$-regular connected graphs with $\D\le 5$, i.e., it does not contain alliance polynomials of graphs which are not connected $\D$-regular.

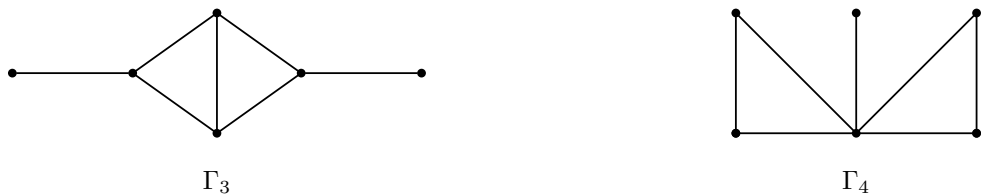
\begin{figure}[h]
\begin{minipage}[t]{8cm}
\centering
\scalebox{.8}
{\begin{pspicture}(-4,-0.4)(4,2.4)
\psline[linewidth=0.03cm,dotsize=0.07055555cm 2.5]{*-*}(-3.4,1)(-1.4,1)
\psline[linewidth=0.03cm,dotsize=0.07055555cm 2.5]{*-*}(3.4,1)(1.4,1)
\psline[linewidth=0.03cm,dotsize=0.07055555cm 2.5]{*-*}(0,0)(0,2)
\psline[linewidth=0.03cm,dotsize=0.07055555cm 2.5]{-}(-1.4,1)(0,0)(1.4,1)(0,2)(-1.4,1)
\end{pspicture}}
\\
$\G_3$
\end{minipage}
\hfill
\begin{minipage}[t]{8cm}
\centering
\scalebox{.8}
{\begin{pspicture}(-2.4,-0.4)(2.4,2.4)
\psline[linewidth=0.03cm,dotsize=0.07055555cm 2.5]{*-*}(-2,0)(-2,2)
\psline[linewidth=0.03cm,dotsize=0.07055555cm 2.5]{*-*}(2,0)(2,2)
\psline[linewidth=0.03cm,dotsize=0.07055555cm 2.5]{*-*}(0,0)(0,2)
\psline[linewidth=0.03cm,dotsize=0.07055555cm 2.5]{-*}(-2,2)(0,0)(-2,0)
\psline[linewidth=0.03cm,dotsize=0.07055555cm 2.5]{-*}(2,2)(0,0)(2,0)
\end{pspicture}}
\\
$\G_4$
\end{minipage}
\caption{Graphs with same characteristic polynomial.} \label{fig:12}
\end{figure}

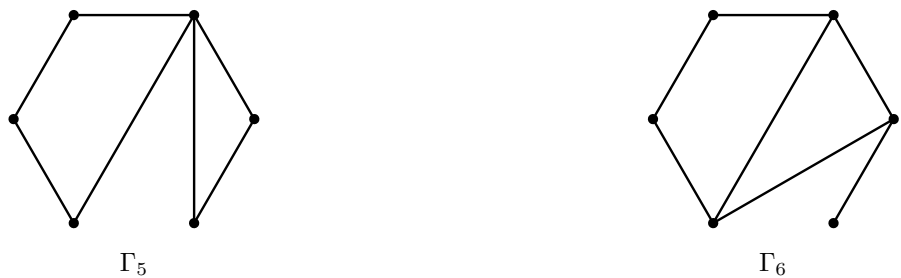
\begin{figure}[h]
\begin{minipage}[t]{8cm}
\centering
\scalebox{.8}
{\begin{pspicture}(-2.2,-2)(2.2,2)
\psline[linewidth=0.04cm,dotsize=0.07055555cm 2.5]{*-*}(1,-1.73)(2,0)
\psline[linewidth=0.04cm,dotsize=0.07055555cm 2.5]{-*}(2,0)(1,1.73)
\psline[linewidth=0.04cm,dotsize=0.07055555cm 2.5]{-*}(1,1.73)(-1,1.73)
\psline[linewidth=0.04cm,dotsize=0.07055555cm 2.5]{-*}(-1,1.73)(-2,0)
\psline[linewidth=0.04cm,dotsize=0.07055555cm 2.5]{-*}(-2,0)(-1,-1.73)
\psline[linewidth=0.04cm,dotsize=0.07055555cm 2.5]{-}(-1,-1.73)(1,1.73)(1,-1.73)
\end{pspicture}}
\\
$\G_5$
\end{minipage}
\hfill
\begin{minipage}[t]{8cm}
\centering
\scalebox{.8}
{\begin{pspicture}(-2.2,-2)(2.2,2)
\psline[linewidth=0.04cm,dotsize=0.07055555cm 2.5]{*-*}(1,-1.73)(2,0)
\psline[linewidth=0.04cm,dotsize=0.07055555cm 2.5]{-*}(2,0)(1,1.73)
\psline[linewidth=0.04cm,dotsize=0.07055555cm 2.5]{-*}(1,1.73)(-1,1.73)
\psline[linewidth=0.04cm,dotsize=0.07055555cm 2.5]{-*}(-1,1.73)(-2,0)
\psline[linewidth=0.04cm,dotsize=0.07055555cm 2.5]{-*}(-2,0)(-1,-1.73)
\psline[linewidth=0.04cm,dotsize=0.07055555cm 2.5]{-}(2,0)(-1,-1.73)(1,1.73)
\end{pspicture}}
\\
$\G_6$
\end{minipage}
\caption{Graphs with same bivariate chromatic polynomial.} \label{fig:34}
\end{figure}


We denote by $G_1\Box G_2$ and $G_1\boxtimes G_2$ the Cartesian and the strong products of $G_1$ and $G_2$, respectively.

\begin{proposition}\label{p:distinct}
For the graphs $\G_i$, $i = 1,...,6$, from Figures \ref{fig:1}, \ref{fig:12}, \ref{fig:34} and for $P_4$, $K_{1,3}$, $P_5$, $P_2\cup C_3$, $K_{3,3}$, $P_2\Box C_3$, $P_2\boxtimes P_3$ and $E_2\uplus P_4$ we have

\begin{enumerate}[$(1)$]
  \item {$p(\G_3;x) = p(\G_4;x)$ but $A(\G_3;x) \neq A(\G_4;x)$.}
  \item {$m(P_2\cup C_3;x) = m(P_5;x)$ but $A(P_2\cup C_3;x) \neq A(P_5;x)$.}
  \item {$I(P_2\boxtimes P_3;x)=I(E_2\uplus P_4;x)$ but $A(P_2\boxtimes P_3;x) \neq A(E_2\uplus P_4;x)$.}
  \item {$D(K_{3,3};x) = D(P_2\Box C_3;x)$ but $A(K_{3,3};x) \neq A(P_2\Box C_3;x)$.}
  \item {$P(\G_5;x,y) = P(\G_6;x,y)$ but $A(\G_5;x) \neq A(\G_6;x)$.}
  \item {$T(P_4;x,y) = T(K_{1,3};x,y)$ but $A(P_4;x) \neq A(K_{1,3};x)$.}
  \item {$Q(\G_1;x,y) = Q(\G_{2};x,y)$ but $A(\G_1;x) \neq A(\G_2;x)$.}
\end{enumerate}
\end{proposition}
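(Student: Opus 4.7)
The plan for each of the seven items is the same: verify the stated equality under the companion polynomial, and then compute the two alliance polynomials and observe that they differ. The equalities under the companion polynomials are either recorded in the sources cited in this section or follow immediately from the definitions given there. Item (6) is a case of the classical observation that all trees on the same number of vertices share the same Tutte polynomial, so $T(P_4;x,y)=T(K_{1,3};x,y)=(x-1)^3$. Item (2) is the standard example of matching-cospectral non-isomorphic graphs and is verified by expanding $m$ on both sides. Items (1), (3), (4), (5), (7) are straightforward verifications from the definitions in \cite{AAP}, \cite{HL}, \cite{DPT}, \cite{TAM} on the explicit graphs drawn in Figures \ref{fig:1}, \ref{fig:12} and \ref{fig:34}; none of these requires alliance-theoretic tools, so I would outline them briefly and concentrate on the alliance side.

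Most of the alliance polynomials needed are already available from earlier sections. $A(K_{3,3};x)$ is computed right after the definition of $A(\G;x)$, and the polynomials $A(\G_1;x)$, $A(\G_2;x)$ are displayed at the start of Section \ref{Sec3}, so item (7) is immediate by inspection. Proposition \ref{p:Pn} gives $A(P_n;x)$ for $n\in\{2,4,5\}$; Proposition \ref{p:Cn} gives $A(C_3;x)$; Corollary \ref{c:Sn} gives $A(K_{1,3};x)=A(S_4;x)$; and Theorem \ref{t:disjoints} combines $A(P_2;x)$ with $A(C_3;x)$ to yield $A(P_2\cup C_3;x)$. A termwise comparison then settles items (2) and (6) at once. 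For $E_2\uplus P_4$ in item (3), Theorem \ref{t:join} reduces the problem to $A(E_2;x)$ (known from Corollary \ref{c:vacio}), $A(P_4;x)$, and the auxiliary polynomial $\widetilde{A}(E_2,P_4;x)$, which is computed by running over the non-trivial $R_1\cup R_2$ unions and tracking $k_S$ via Remark \ref{r:min_deg}.

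The remaining graphs $\G_3,\G_4,\G_5,\G_6,P_2\Box C_3$ and $P_2\boxtimes P_3$ do not fit any closed formula from Section \ref{Sec3}, so I would compute their alliance polynomials directly by Algorithm \ref{algorithm}: iterate over the $2^n-1$ nonempty subsets, discard the disconnected ones, and use Remark \ref{r:min_deg} to read off the exact alliance index $k_S$. Once all polynomials in the list are tabulated, each inequality is obtained by inspection of coefficients. The main obstacle is not conceptual but combinatorial bookkeeping: the nine-vertex graphs in Figure \ref{fig:1} produce $511$ candidate subsets each, and for the prism $P_2\Box C_3$ and the strong product $P_2\boxtimes P_3$ one must be careful to distinguish vertices of different degree when evaluating $\d_S(v)-\d_{\overline S}(v)$. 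In practice this part is executed by computer and the resulting coefficient lists compared directly, which is what makes the proof clean despite the length of the enumeration.
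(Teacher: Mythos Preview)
Your plan is correct and would certainly work, but it is considerably more labor-intensive than the paper's route. The paper avoids almost all of the direct enumeration you propose for the alliance side by exploiting structural facts proved earlier:
\begin{itemize}
\item For items (1), (3) and (5) the paper does not compute $A(\G_3;x)$, $A(\G_4;x)$, $A(P_2\boxtimes P_3;x)$, $A(E_2\uplus P_4;x)$, $A(\G_5;x)$, $A(\G_6;x)$ at all. Instead it observes that $\G_3$, $P_2\boxtimes P_3$ and $\G_5$ have degree sequences of a single parity, whereas $\G_4$, $E_2\uplus P_4$ and $\G_6$ do not; Proposition~\ref{p:AlliPoly}\,v) then says the alliance polynomial is symmetric in the first three cases and not in the other three, so the polynomials cannot coincide.
\item For items (2) and (6) the paper simply invokes Theorem~\ref{t:UniqPn}: since $P_5$ and $P_4$ are characterized by their alliance polynomials, anything non-isomorphic to them must have a different alliance polynomial. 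No computation of $A(P_2\cup C_3;x)$ or $A(K_{1,3};x)$ is needed.
\item For item (4) the paper cites \cite[Proposition~3.1]{RST} rather than running Algorithm~\ref{algorithm} on $P_2\Box C_3$.
\end{itemize}
Only item (7) is handled the same way in both approaches, via the explicit polynomials recorded at the start of Section~\ref{Sec3}.

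What each approach buys: yours is self-contained and uniform---one mechanism, applied seven times---at the price of running the $O(m2^n)$ enumeration on several six- and nine-vertex graphs. The paper's approach trades this for one-line invocations of the parity/symmetry criterion and the characterization theorems already established, so it is shorter and emphasizes that those earlier results have real teeth. A minor slip: for trees with $k$ edges the Tutte polynomial is $x^{k}$, not $(x-1)^{k}$, so $T(P_4;x,y)=T(K_{1,3};x,y)=x^{3}$; the equality you want still holds.
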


\begin{proof}
Proposition \ref{p:AlliPoly} v) gives that $A(\G_3;x)$, $A(P_2\boxtimes P_3;x)$ and $A(\G_5;x)$ are symmetric polynomials, but $A(\G_4;x)$, $A(E_2\uplus P_4;x)$ and $A(\G_6;x)$ are not symmetric; then $A(\G_3;x)\neq A(\G_4;x)$, $A(P_2\boxtimes P_3;x) \neq A(E_2\uplus P_4;x)$ and $A(\G_5;x)\neq A(\G_6;x)$.
Besides, by Theorem \ref{t:UniqPn} we have that $P_4$ and $P_5$ are characterized by their alliance polynomials, and so, $A(P_2\cup C_3;x) \neq A(P_5;x)$ and $A(P_4;x) \neq A(K_{1,3};x)$. Furthermore, by \cite[Proposition 3.1]{RST} we have $A(K_{3,3};x) \neq A(P_2\Box C_3;x)$.
Besides, $A(\G_1;x)\neq A(\G_2;x)$ (see the beginning of Section \ref{Sec3}).
A simple computation gives $p(\G_3;x) = p(\G_4;x)$, $m(P_2\cup C_3;x) = m(P_5;x)$, $I(P_2\boxtimes P_3;x)=I(E_2\uplus P_4;x)$ and $D(K_{3,3};x) = D(P_2\Box C_3;x)$. So, items (1), (2), (3) and (4) hold.
Item (5) follows from \cite{DPT}.
Since Tutte polynomial does not distinguish trees of the same size, we deduce item (6).
Finally, $Q(\G_1;x,y) = Q(\G_{2};x,y)$ follows from \cite{TAM}, and we have item (7).
\end{proof}

The results in Section 3 and \cite{CRST2,RST} suggest the conjecture that every graph can be characterized by its alliance polynomial, although it seems hard to be proved.

However, if our conjecture turned out to be false, we think that the study of the following problem could be of interest.

\begin{problem}\label{problema}
Are there simple graphs distinguished by $p(\G;x)$, $m(\G;x)$, $I(\G;x)$, $D(\G;x)$, $P(\G;x,y)$, $T(\G;x,y)$ or $Q(\G;x,y)$ which are not distinguished by $A(\G;x)$?
\end{problem}

\section*{Acknowledgements}

This work was partly supported by a grant for Mobility of own research program at the University Carlos III de Madrid and a grant from CONACYT (CONACYT-UAG I0110/62/10), M\'exico.

\end{document}